\newtheorem{thm}{Theorem}[section]
\newtheorem{prop}[thm]{Proposition}
\newtheorem{lemma}[thm]{Lemma}
\newtheorem{cor}[thm]{Corollary}
\newtheorem{defn}[thm]{Definition}
\newtheorem{ex}[thm]{Example}
\newtheorem{rem}[thm]{Remark}
\newenvironment{myThm}[1]%
{%
\vskip 6pt%
\noindent%
\textbf{Theorem #1. }%
\ignorespaces%
}%
{%
\par\noindent\ignorespacesafterend%
}
\newcommand{\z}[1]{\mathbb{Z}/{#1}}
\newcommand{\zzz}{\mathbb{Z}}
\newcommand{\qqq}{\mathbb{Q}}
\newcommand{\qdim}{\operatorname{qdim}}
\newcommand{\supp}{\operatorname{supp}}
\newcommand{\GpAlg}[2]{\mathbb{#1}\left[ #2 \right]}
\newcommand{\gpAlg}[1]{\GpAlg{Q}{#1}}
\newcommand{\gpRing}[1]{\GpAlg{Z}{#1}}
\newcommand{\cyclicGroupAlg}[1]{\gpAlg{\z{#1}}}
\newcommand{\cyclotomicField}[1]{\gpAlg{\xi_{#1}}}
\newcommand{\Iso}{\operatorname{Iso}}
\newcommand{\Hom}[4]{\operatorname{Hom}^{#1}_{#2} \left(#3, #4 \right)}
\newcommand{\cross}{\operatorname{Cr}}
\newcommand{\raisebox{-5pt}{
\begingroup%
  \makeatletter%
  \providecommand\color[2][]{%
    \errmessage{(Inkscape) Color is used for the text in Inkscape, but the package 'color.sty' is not loaded}%
    \renewcommand\color[2][]{}%
  }%
  \providecommand\transparent[1]{%
    \errmessage{(Inkscape) Transparency is used (non-zero) for the text in Inkscape, but the package 'transparent.sty' is not loaded}%
    \renewcommand\transparent[1]{}%
  }%
  \providecommand\rotatebox[2]{#2}%
  \ifx\svgwidth\undefined%
    \setlength{\unitlength}{18.70443764bp}%
    \ifx\svgscale\undefined%
      \relax%
    \else%
      \setlength{\unitlength}{\unitlength * \real{\svgscale}}%
    \fi%
  \else%
    \setlength{\unitlength}{\svgwidth}%
  \fi%
  \global\let\svgwidth\undefined%
  \global\let\svgscale\undefined%
  \makeatother%
  \begin{picture}(1,0.91836368)%
    \put(0,0){\includegraphics[width=\unitlength,page=1]{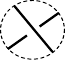}}%
  \end{picture}%
\endgroup%
}}{\raisebox{-5pt}{
\begingroup%
  \makeatletter%
  \providecommand\color[2][]{%
    \errmessage{(Inkscape) Color is used for the text in Inkscape, but the package 'color.sty' is not loaded}%
    \renewcommand\color[2][]{}%
  }%
  \providecommand\transparent[1]{%
    \errmessage{(Inkscape) Transparency is used (non-zero) for the text in Inkscape, but the package 'transparent.sty' is not loaded}%
    \renewcommand\transparent[1]{}%
  }%
  \providecommand\rotatebox[2]{#2}%
  \ifx\svgwidth\undefined%
    \setlength{\unitlength}{18.70443764bp}%
    \ifx\svgscale\undefined%
      \relax%
    \else%
      \setlength{\unitlength}{\unitlength * \real{\svgscale}}%
    \fi%
  \else%
    \setlength{\unitlength}{\svgwidth}%
  \fi%
  \global\let\svgwidth\undefined%
  \global\let\svgscale\undefined%
  \makeatother%
  \begin{picture}(1,0.91836368)%
    \put(0,0){\includegraphics[width=\unitlength,page=1]{crossing_nonoriented.pdf}}%
  \end{picture}%
\endgroup%
}}
\newcommand{\raisebox{-5pt}{\import{img/}{0-smoothing.pdf_tex}}}{\raisebox{-5pt}{\import{img/}{0-smoothing.pdf_tex}}}
\newcommand{\raisebox{-5pt}{\import{img/}{1-smoothing.pdf_tex}}}{\raisebox{-5pt}{\import{img/}{1-smoothing.pdf_tex}}}
\newcommand{\raisebox{-5pt}{
\begingroup%
  \makeatletter%
  \providecommand\color[2][]{%
    \errmessage{(Inkscape) Color is used for the text in Inkscape, but the package 'color.sty' is not loaded}%
    \renewcommand\color[2][]{}%
  }%
  \providecommand\transparent[1]{%
    \errmessage{(Inkscape) Transparency is used (non-zero) for the text in Inkscape, but the package 'transparent.sty' is not loaded}%
    \renewcommand\transparent[1]{}%
  }%
  \providecommand\rotatebox[2]{#2}%
  \ifx\svgwidth\undefined%
    \setlength{\unitlength}{18.70443764bp}%
    \ifx\svgscale\undefined%
      \relax%
    \else%
      \setlength{\unitlength}{\unitlength * \real{\svgscale}}%
    \fi%
  \else%
    \setlength{\unitlength}{\svgwidth}%
  \fi%
  \global\let\svgwidth\undefined%
  \global\let\svgscale\undefined%
  \makeatother%
  \begin{picture}(1,0.91836368)%
    \put(0,0){\includegraphics[width=\unitlength,page=1]{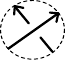}}%
  \end{picture}%
\endgroup%
}}{\raisebox{-5pt}{
\begingroup%
  \makeatletter%
  \providecommand\color[2][]{%
    \errmessage{(Inkscape) Color is used for the text in Inkscape, but the package 'color.sty' is not loaded}%
    \renewcommand\color[2][]{}%
  }%
  \providecommand\transparent[1]{%
    \errmessage{(Inkscape) Transparency is used (non-zero) for the text in Inkscape, but the package 'transparent.sty' is not loaded}%
    \renewcommand\transparent[1]{}%
  }%
  \providecommand\rotatebox[2]{#2}%
  \ifx\svgwidth\undefined%
    \setlength{\unitlength}{18.70443764bp}%
    \ifx\svgscale\undefined%
      \relax%
    \else%
      \setlength{\unitlength}{\unitlength * \real{\svgscale}}%
    \fi%
  \else%
    \setlength{\unitlength}{\svgwidth}%
  \fi%
  \global\let\svgwidth\undefined%
  \global\let\svgscale\undefined%
  \makeatother%
  \begin{picture}(1,0.91836368)%
    \put(0,0){\includegraphics[width=\unitlength,page=1]{crossing_pos.pdf}}%
  \end{picture}%
\endgroup%
}}
\newcommand{\raisebox{-5pt}{
\begingroup%
  \makeatletter%
  \providecommand\color[2][]{%
    \errmessage{(Inkscape) Color is used for the text in Inkscape, but the package 'color.sty' is not loaded}%
    \renewcommand\color[2][]{}%
  }%
  \providecommand\transparent[1]{%
    \errmessage{(Inkscape) Transparency is used (non-zero) for the text in Inkscape, but the package 'transparent.sty' is not loaded}%
    \renewcommand\transparent[1]{}%
  }%
  \providecommand\rotatebox[2]{#2}%
  \ifx\svgwidth\undefined%
    \setlength{\unitlength}{18.70443764bp}%
    \ifx\svgscale\undefined%
      \relax%
    \else%
      \setlength{\unitlength}{\unitlength * \real{\svgscale}}%
    \fi%
  \else%
    \setlength{\unitlength}{\svgwidth}%
  \fi%
  \global\let\svgwidth\undefined%
  \global\let\svgscale\undefined%
  \makeatother%
  \begin{picture}(1,0.91836368)%
    \put(0,0){\includegraphics[width=\unitlength,page=1]{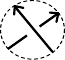}}%
  \end{picture}%
\endgroup%
}}{\raisebox{-5pt}{
\begingroup%
  \makeatletter%
  \providecommand\color[2][]{%
    \errmessage{(Inkscape) Color is used for the text in Inkscape, but the package 'color.sty' is not loaded}%
    \renewcommand\color[2][]{}%
  }%
  \providecommand\transparent[1]{%
    \errmessage{(Inkscape) Transparency is used (non-zero) for the text in Inkscape, but the package 'transparent.sty' is not loaded}%
    \renewcommand\transparent[1]{}%
  }%
  \providecommand\rotatebox[2]{#2}%
  \ifx\svgwidth\undefined%
    \setlength{\unitlength}{18.70443764bp}%
    \ifx\svgscale\undefined%
      \relax%
    \else%
      \setlength{\unitlength}{\unitlength * \real{\svgscale}}%
    \fi%
  \else%
    \setlength{\unitlength}{\svgwidth}%
  \fi%
  \global\let\svgwidth\undefined%
  \global\let\svgscale\undefined%
  \makeatother%
  \begin{picture}(1,0.91836368)%
    \put(0,0){\includegraphics[width=\unitlength,page=1]{crossing_neg.pdf}}%
  \end{picture}%
\endgroup%
}}
\newcommand{\raisebox{-5pt}{
\begingroup%
  \makeatletter%
  \providecommand\color[2][]{%
    \errmessage{(Inkscape) Color is used for the text in Inkscape, but the package 'color.sty' is not loaded}%
    \renewcommand\color[2][]{}%
  }%
  \providecommand\transparent[1]{%
    \errmessage{(Inkscape) Transparency is used (non-zero) for the text in Inkscape, but the package 'transparent.sty' is not loaded}%
    \renewcommand\transparent[1]{}%
  }%
  \providecommand\rotatebox[2]{#2}%
  \ifx\svgwidth\undefined%
    \setlength{\unitlength}{18.70443764bp}%
    \ifx\svgscale\undefined%
      \relax%
    \else%
      \setlength{\unitlength}{\unitlength * \real{\svgscale}}%
    \fi%
  \else%
    \setlength{\unitlength}{\svgwidth}%
  \fi%
  \global\let\svgwidth\undefined%
  \global\let\svgscale\undefined%
  \makeatother%
  \begin{picture}(1,0.91836368)%
    \put(0,0){\includegraphics[width=\unitlength,page=1]{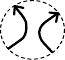}}%
  \end{picture}%
\endgroup%
}}{\raisebox{-5pt}{
\begingroup%
  \makeatletter%
  \providecommand\color[2][]{%
    \errmessage{(Inkscape) Color is used for the text in Inkscape, but the package 'color.sty' is not loaded}%
    \renewcommand\color[2][]{}%
  }%
  \providecommand\transparent[1]{%
    \errmessage{(Inkscape) Transparency is used (non-zero) for the text in Inkscape, but the package 'transparent.sty' is not loaded}%
    \renewcommand\transparent[1]{}%
  }%
  \providecommand\rotatebox[2]{#2}%
  \ifx\svgwidth\undefined%
    \setlength{\unitlength}{18.70443764bp}%
    \ifx\svgscale\undefined%
      \relax%
    \else%
      \setlength{\unitlength}{\unitlength * \real{\svgscale}}%
    \fi%
  \else%
    \setlength{\unitlength}{\svgwidth}%
  \fi%
  \global\let\svgwidth\undefined%
  \global\let\svgscale\undefined%
  \makeatother%
  \begin{picture}(1,0.91836368)%
    \put(0,0){\includegraphics[width=\unitlength,page=1]{orient_resolution.pdf}}%
  \end{picture}%
\endgroup%
}}
\newcommand{
\begingroup%
  \makeatletter%
  \providecommand\color[2][]{%
    \errmessage{(Inkscape) Color is used for the text in Inkscape, but the package 'color.sty' is not loaded}%
    \renewcommand\color[2][]{}%
  }%
  \providecommand\transparent[1]{%
    \errmessage{(Inkscape) Transparency is used (non-zero) for the text in Inkscape, but the package 'transparent.sty' is not loaded}%
    \renewcommand\transparent[1]{}%
  }%
  \providecommand\rotatebox[2]{#2}%
  \ifx\svgwidth\undefined%
    \setlength{\unitlength}{18.70443764bp}%
    \ifx\svgscale\undefined%
      \relax%
    \else%
      \setlength{\unitlength}{\unitlength * \real{\svgscale}}%
    \fi%
  \else%
    \setlength{\unitlength}{\svgwidth}%
  \fi%
  \global\let\svgwidth\undefined%
  \global\let\svgscale\undefined%
  \makeatother%
  \begin{picture}(1,0.91836368)%
    \put(0,0){\includegraphics[width=\unitlength,page=1]{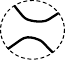}}%
  \end{picture}%
\endgroup%
}{
\begingroup%
  \makeatletter%
  \providecommand\color[2][]{%
    \errmessage{(Inkscape) Color is used for the text in Inkscape, but the package 'color.sty' is not loaded}%
    \renewcommand\color[2][]{}%
  }%
  \providecommand\transparent[1]{%
    \errmessage{(Inkscape) Transparency is used (non-zero) for the text in Inkscape, but the package 'transparent.sty' is not loaded}%
    \renewcommand\transparent[1]{}%
  }%
  \providecommand\rotatebox[2]{#2}%
  \ifx\svgwidth\undefined%
    \setlength{\unitlength}{18.70443764bp}%
    \ifx\svgscale\undefined%
      \relax%
    \else%
      \setlength{\unitlength}{\unitlength * \real{\svgscale}}%
    \fi%
  \else%
    \setlength{\unitlength}{\svgwidth}%
  \fi%
  \global\let\svgwidth\undefined%
  \global\let\svgscale\undefined%
  \makeatother%
  \begin{picture}(1,0.91836368)%
    \put(0,0){\includegraphics[width=\unitlength,page=1]{nonorient_resolution.pdf}}%
  \end{picture}%
\endgroup%
}
\newcommand{\posOrbit}{\raisebox{-5pt}{} \ldots \raisebox{-5pt}{}}
\newcommand{\negOrbit}{\raisebox{-5pt}{} \ldots \raisebox{-5pt}{}}
\newcommand{\orientResOrbit}{\raisebox{-5pt}{} \ldots \raisebox{-5pt}{}}
\newcommand{\lk}{\operatorname{lk}}
\newcommand{\cc}{\operatorname{CKh}}
\newcommand{\kh}{\operatorname{Kh}}
\newcommand{\khp}{\operatorname{KhP}}
\newcommand{\jones}{\operatorname{J}}
\newcommand{\diffJones}{\operatorname{DJ}}
\newcommand{\bnbracket}[1]{[\kern-1.5pt [ #1 ]\kern-1.5pt]_{\operatorname{BN}}}
\newcommand{\khbracket}[1]{[\kern-1.5pt [ #1 ]\kern-1.5pt]_{\operatorname{Kh}}}
\newcommand{\One}{\operatorname{1 \kern-3.75pt 1}}
\title{Equivariant Jones Polynomials of periodic links}
\author{Wojciech Politarczyk}
\begin{document}

\maketitle

\begin{abstract}
This paper continues the study of periodic links started in \cite{Politarczyk2}. It contains a 
study of the equivariant analogues of the Jones polynomial, which can be obtained from the 
equivariant Khovanov homology. In this paper we describe basic properties of such polynomials, show 
that they satisfy an analogue of the skein relation and develop a state-sum formula. The skein 
relation in the equivariant case is used to strengthen the periodicity criterion of Przytycki from 
\cite{Przytycki}. The state-sum formula is used to reproved the classical congruence of Murasugi 
from \cite{Murasugi1}.
\end{abstract}

\section{Introduction}

A link is periodic if it possesses certain rotational symmetry of finite order. To be more precise, 
periodic links, are invariant under some semi-free action of a cyclic group $\z{n}$ on $S^{3}$. Due 
to the resolution of the Smith Conjecture, see \cite{BassMorgan}, this property can be restated in 
the following way. A link is $n$-periodic, if it is invariant under a rotation of $\mathbb{R}^{3}$ 
of order $n$. Additionally, we require the link to be disjoint from the axis of the rotation.

The relevance of periodic links stems from the fact, that according to \cite{PrzytyckiSokolov}, 
many finite order symmetries of $3$-manifolds come from symmetries of their Kirby diagrams, i.e., 
if a $3$-manifolds admits an action of a cyclic group of finite order with the fixed point 
homeomorphic to a circle, then it is a result of a Dehn surgery on a periodic link. Additionally, 
periodic links should give us an insight into the theory of skein modules of branched and 
unbranched covers.

Link polynomials of periodic links have been studied by many authors. For example 
\cites{DavisLivingston,Murasugi2} study the Alexander polynomial of periodic links. The first 
paper, 
which studies the Jones polynomial of periodic links is \cite{Murasugi1}, where the author obtained 
a congruence which gives a relation between the Jones polynomial of a periodic link and a 
Jones polynomial of its quotient in terms of a certain congruence. This criterion proved to be very 
efficient in verifying whether a given link is periodic or not. In \cite{Traczyk}, another 
periodicity criterion is given. This criterion gives another congruence for the Jones polynomial. 
This criterion was later generalized in \cites{Przytycki,Yokota}. A survey of the results on the 
Jones polynomials of periodic links can be found in \cite{Przytycki3}

This paper is a continuation of \cite{Politarczyk2}, where the author developed a link homology 
theory, called the Khovanov homology, which is an adaptation of the Khovanov homology to the 
equivariant setting of periodic links. In this paper, instead of studying the homology, we study 
analogues of the Jones polynomial, which can be obtained from the equivariant Khovanov homology. We 
utilize the properties of the homology theory described in \cite{Politarczyk2} to obtain properties 
of the equivariant Jones polynomials, which are further used to obtain some periodicity criteria 
for the Jones polynomial.

In \cite{Politarczyk2} the rational equivariant Khovanov homology of an $n$-periodic link $L$ was 
defined as a triply-graded $\qqq$-vector space
$$\kh_{\z{n}}^{\ast,\ast,\ast}(L;\qqq),$$
where the third grading is supported only for dimensions $d$ such that $d \mid n$ and $d>0$. In 
fact, for any $d \mid n$, the vector space
$$\kh_{\z{n}}^{\ast,\ast,d}(L;\qqq)$$
is a vector space over the larger field $\cyclotomicField{d}$, i.e. the $d$-th cyclotomic field. 
This structure is taken into account in the definition of the equivariant Jones polynomials.
$$\jones_{n,d}(L) = \sum_{i,j} t^{i}q^{j} \dim_{\cyclotomicField{d}} \kh_{\z{n}}^{i,j,d}(L;\qqq).$$
Theorem \ref{thm:properties_equivariant_polynomials} summarizes basic properties of these 
polynomials.

When $p$ is a prime and we study $p^{n}$-periodic links, it is better to consider the following 
difference Jones polynomials
$$\diffJones_{n,s}(L) = \jones_{p^{n},p^{s}}(L) - \jones_{p^{n},p^{s+1}}(L),$$
for $0 \leq s \leq n$. It turns out, that these polynomials have much better properties than the 
equivariant Jones polynomials. The first main result of this paper is the following theorem.

\begin{myThm}{\ref{thm:skein_relations_equiv_jones_pol}}
The difference Jones polynomials have the following properties
\begin{enumerate}
\item $\diffJones_{0}$ satisfies the following version of the skein relation
\begin{align*}
&q^{-2p^n} \diffJones_{n,0}\left(\posOrbit\right) - q^{2p^n} 
\diffJones_{n, 0}\left(\negOrbit\right) = \\
&= \left(q^{-p^n} - q^{p^n}\right) \diffJones_{n,0}\left(\orientResOrbit\right),
\end{align*}
where $\posOrbit$, $\negOrbit$ and $\orientResOrbit$ denote the orbit of positive, negative and 
orientation preserving resolutions of crossing, respectively.
\item for any $0 \leq s \leq n$, $\diffJones_{s}$ satisfies the following congruences
\begin{align*}
&q^{-2p^{n}}\diffJones_{n,n-s}\left(\posOrbit\right) - 
q^{2p^{n}}\diffJones_{n,n-s}\left(\negOrbit\right) \equiv \\
&\equiv \left(q^{-p^n} - q^{p^n}\right) \diffJones_{n,n-s}\left(\orientResOrbit\right) 
\pmod{q^{p^{s}} - q^{-p^{s}}}.
\end{align*}
\end{enumerate}
\end{myThm}
In other words, difference Jones polynomials satisfy an analogue of the skein relation of the 
classical Jones polynomial. The above theorem can be used to easily recover the periodicity 
criterion from \cite{Przytycki}. However, if we use one other property of the equivariant Khovanov 
homology, we can strengthen this criterion considerably.

\begin{myThm}{\ref{thm: strength Przytycki thm}}
Suppose that $L$ is a $p^n$-periodic link and for all $i,j$ we have $\dim_{\qqq} \kh^{i,j}(L;\qqq) 
< \varphi(p^{s})$, then the following congruence holds
$$\jones(L)(q) \equiv \jones(L)(q^{-1}) \pmod{\mathcal{I}_{p^{n},s}},$$
where $\mathcal{I}_{p^{n},s}$ is the ideal generated by the following monomials
$$q^{p^{n}} - q^{-p^{n}}, p\left(q^{p^{n-1}} - q^{-p^{n-1}}\right), \ldots, p^{s-1} 
\left(q^{p^{n-s+1}} - q^{-p^{n-s+1}}\right)$$
and $\jones$ denotes the unreduced Jones polynomial.
\end{myThm}

\begin{ex}\label{ex: congruence 10_61}
Consider the $10_{61}$ knot from the Rolfsen table \cite{Rolfsen}. If we are 
interested in the symmetry of order $5$, then according to SAGE \cite{Sage}, the following 
congruence holds
$$\jones(10_{61})(q) - \jones(10_{61})(q^{-1}) \equiv 0 \pmod{q^{5} -q^{-5}, 5(q - q^{-1})}.$$
Hence, Przytycki's Theorem does not obstruct $10_{61}$ to have symmetry of order $5$. However, if 
we notice that, as depicted on Figure \ref{fig: ranks kh 10_61}, the dimension of 
$\kh^{i,j}(10_{61})$ is always smaller that $\varphi(5) = 4$ and apply Theorem \ref{thm: strength 
Przytycki thm} we obtain
$$\jones(10_{61})(q) - \jones(10_{61})(q^{-1}) \not\equiv 0 \pmod{q^{5} -q^{-5}}$$
Consequently $10_{61}$ is not $5$-periodic.
\end{ex}

\begin{figure}
\centering
\begin{tikzpicture}[scale=.66]
  \draw[->] (0,0) -- (11.5,0) node[right] {$i$};
  \draw[->] (0,0) -- (0,12.5) node[above] {$j$};
  \draw[step=1] (0,0) grid (11,12);
  \draw (5.750,-0.8) node[below] {$\kh$};
  \draw (0.5,-.2) node[below] {$-4$};
  \draw (1.5,-.2) node[below] {$-3$};
  \draw (2.5,-.2) node[below] {$-2$};
  \draw (3.5,-.2) node[below] {$-1$};
  \draw (4.5,-.2) node[below] {$0$};
  \draw (5.5,-.2) node[below] {$1$};
  \draw (6.5,-.2) node[below] {$2$};
  \draw (7.5,-.2) node[below] {$3$};
  \draw (8.5,-.2) node[below] {$4$};
  \draw (9.5,-.2) node[below] {$5$};
  \draw (10.5,-.2) node[below] {$6$};
  \draw (-.2,0.5) node[left] {$-5$};
  \draw (-.2,1.5) node[left] {$-3$};
  \draw (-.2,2.5) node[left] {$-1$};
  \draw (-.2,3.5) node[left] {$1$};
  \draw (-.2,4.5) node[left] {$3$};
  \draw (-.2,5.5) node[left] {$5$};
  \draw (-.2,6.5) node[left] {$7$};
  \draw (-.2,7.5) node[left] {$9$};
  \draw (-.2,8.5) node[left] {$11$};
  \draw (-.2,9.5) node[left] {$13$};
  \draw (-.2,10.5) node[left] {$15$};
  \draw (-.2,11.5) node[left] {$17$};
  \draw (0.5, 0.5) node {$1$};
  \draw (1.5, 2.5) node {$1$};
  \draw (2.5, 2.5) node {$3$};
  \draw (3.5, 3.5) node {$1$};
  \draw (3.5, 4.5) node {$3$};
  \draw (4.5, 4.5) node {$3$};
  \draw (4.5, 5.5) node {$2$};
  \draw (5.5, 5.5) node {$3$};
  \draw (5.5, 6.5) node {$2$};
  \draw (6.5, 6.5) node {$2$};
  \draw (6.5, 7.5) node {$3$};
  \draw (7.5, 7.5) node {$2$};
  \draw (7.5, 8.5) node {$2$};
  \draw (8.5, 8.5) node {$1$};
  \draw (8.5, 9.5) node {$2$};
  \draw (9.5, 9.5) node {$1$};
  \draw (9.5, 10.5) node {$1$};
  \draw (10.5, 11.5) node {$1$};
\end{tikzpicture}
\caption{Ranks of $\kh^{i,j}(10_{61})$ according to \cite{KnotKit}.}
\label{fig: ranks kh 10_61}
\end{figure}

The second main theorem of this papers gives a state-sum formula for the difference Jones 
polynomials. This formula shows that in order to obtain the difference polynomial 
$\diffJones_{n,n-m}(L)$ we need to consider only Kauffman states $s \in \mathcal{S}^{p^{v}}(D)$, 
i.e., those Kauffman states, which inherit a symmetry of order $p^{v}$ from $L$, for $m \leq v \leq 
n$.

\begin{myThm}{\ref{thm: state sum formula}}
Let $D$ be a $p^n$-periodic diagram of a link and let $0 \leq m \leq n$. Under this assumptions, 
the following equality holds.
$$\diffJones_{n,n-m}(D) = (-1)^{n_{-}(D)} q^{n_{+}(D) - 2n_{-}(D)} \sum_{m \leq v \leq n} 
\sum_{s \in \mathcal{S}^{p^{v}}(D)} (-q)^{r(s)} \diffJones_{v,v-m}(s).$$
For a Kauffman state $s$ we write $r(s) = r$ if $s \in \mathcal{S}_{r}(D)$, compare Definition 
\ref{defn: Kauffman states of periodic diagram}.
\end{myThm}

The state sum formula for $\diffJones_{n,0}(L)$ is used to reprove the congruence from 
\cite{Murasugi1}.

The paper is organized as follows. Section \ref{sec: equiv kh} contains a summary of results about 
the equivariant Khovanov homology, which are needed in this paper. In section \ref{sec: equiv 
jones} we describe the basic properties of the equivariant Jones polynomials, and difference 
polynomials. Further, we use Theorem \ref{thm:skein_relations_equiv_jones_pol} to generalize 
Przytycki's periodicity criterion. Section \ref{sec: state-sum formula} contains the discussion of 
the state sum formula and its implications. Section \ref{sec: proofs} contains the proofs of 
Theorem \ref{thm:skein_relations_equiv_jones_pol} and Theorem \ref{thm: state sum formula}.

\paragraph*{Convention} In the remainder part of this article we adopt the convention that all 
links are oriented unless stated otherwise. Furthermore, we always use the unreduced 
Jones polynomial.

\paragraph*{Acknowledgments} The author is very grateful to Józef Przytycki for bringing 
this problem to his attention, to Maciej Borodzik and Prof. Krzysztof Pawałowski for their 
suggestions and corrections, which improved the presentation of the paper.

\section{Equivariant Khovanov homology}
\label{sec: equiv kh}

This section surveys the results from \cite{Politarczyk2}, which will be needed in this 
paper. We only focus on the rational equivariant homology, since our main focus is on the 
equivariant analogues of the Jones polynomial.

Let us start with the recollection of the definition of a periodic link.

\begin{defn}
Let $n$ be a positive integer, and let $L$ be a link in $S^{3}$. We say that $L$ is $n$-periodic, 
if there exists an action of the cyclic group of order $n$ on $S^3$ satisfying the following 
conditions. 
\begin{enumerate}
\item The fixed point set, denoted by $F$, is the unknot.
\item $L$ is disjoint from $F$.
\item $L$ is a $\z{n}$-invariant subset of $S^3$.
\end{enumerate}
\end{defn}

\begin{figure}
\centering
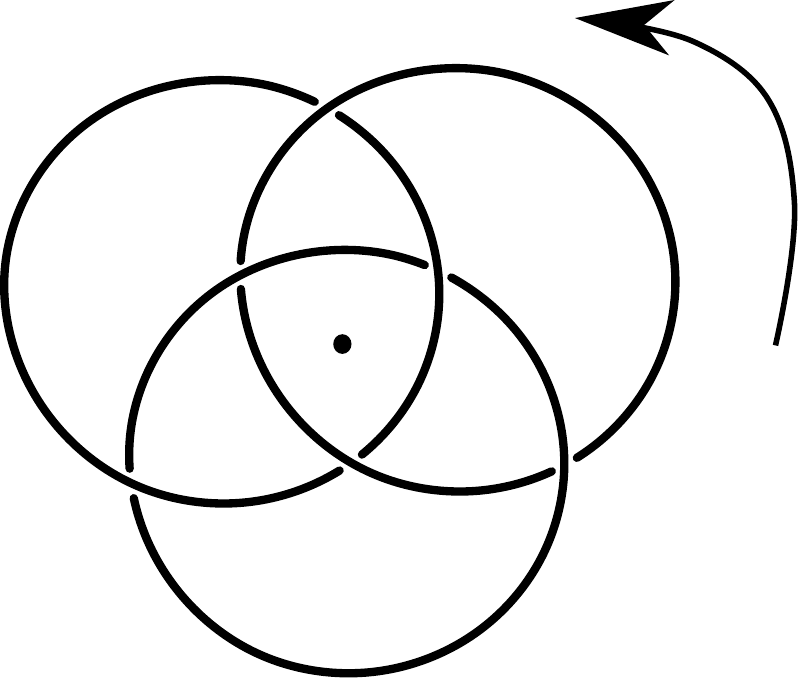
\caption{Borromean rings are $3$-periodic. The fixed point axis $F$ is marked with a dot.}
\label{fig: bor rings 3-periodic}
\end{figure}

\begin{ex}
Borromean rings provide an example of a $3$-periodic link. The symmetry is visualized on Figure 
\ref{fig: bor rings 3-periodic}. The dot marks the fixed point axis.
\end{ex}

\begin{ex}\label{ex: periodic torus links}
Torus links constitute an infinite family of periodic links. In fact, according to \cite{Murasugi2},
the torus link $T(m,n)$ is $d$-periodic if, and only if, $d$ divides either $m$ or $n$.
\end{ex}

Periodic diagrams of periodic links can be described in terms of planar algebras. For the 
definition of planar algebras see \cite{BarNatan}. Take an $n$-periodic planar diagram $D_{n}$ 
with $n$ input disks, like the one on Figure \ref{fig: planar diagram periodic}. Choose a tangle 
$T$ which possesses enough endpoints, and glue $n$ copies of $T$ into the input disks of $D_{n}$. 
In this way, we obtain a periodic link whose quotient is represented by an appropriate closure of 
$T$. See Figure \ref{fig: planar diagram periodic link} for an example.

\begin{figure}
\centering
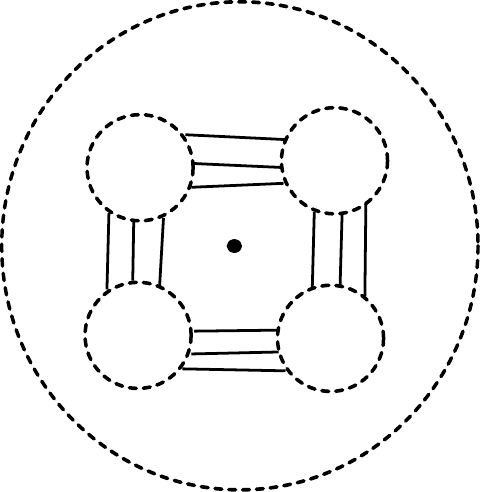
\caption{$4$-periodic planar diagram.}
\label{fig: planar diagram periodic}
\end{figure}

\begin{figure}
\centering
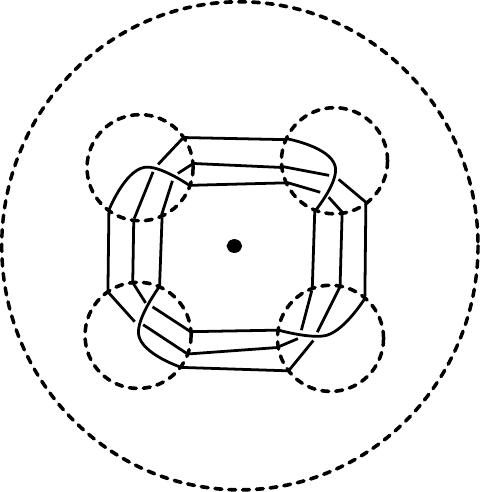
\caption{Torus knot $T(3,4)$ as a $4$-periodic knot obtained from the planar diagram from Figure 
\ref{fig: planar diagram periodic}}
\label{fig: planar diagram periodic link}
\end{figure}

Let $D$ be an $n$-periodic diagram of an $n$ periodic link. The Kauffman states of $D$ can be 
naturally divided into several families according to the type of symmetry they inherit from $D$.

\begin{defn}\label{defn: Kauffman states of periodic diagram}
\begin{enumerate}
\item Let $\mathcal{S}_{r}(D)$ denote the set of Kauffman states of $D$ which were obtained by 
resolving exactly $r$ crossings with the $1$-smoothing.
\item For $d \mid n$, let $\mathcal{S}^{d}(D)$ denote the set of Kauffman states which inherit 
a symmetry of order $d$ from the symmetry of $D$, that is Kauffman states of the form
$$D_{n}(T_1, \ldots, T_{\frac{n}{d}}, T_{1}, \ldots, T_{\frac{n}{d}}, \ldots, T_{1}, \ldots, 
T_{\frac{n}{d}}),$$
where $T_{1}, \ldots, T_{\frac{n}{d}}$ are distinct resolutions of $T$.
\item For a Kauffman state $s$, write $\Iso_{D}(s) = \z{d}$ if, and only if $s \in 
\mathcal{S}^{d}(D)$.
\item Define $\mathcal{S}_{r}^{d}(D) = \mathcal{S}^{d}(D) \cap \mathcal{S}_{r}(D)$.
\item Define $\overline{\mathcal{S}}_{r}^{d}(D)$ to be the quotient of $\mathcal{S}_{r}^{d}(D)$ by 
the action of $\z{n}$.
\end{enumerate}
\end{defn}

\begin{rem}
If $\mathcal{S}^{d}_{r}(D)$ is non-empty, then $d \mid \gcd(n,r)$.
\end{rem}

Analysis of the Khovanov complex $\cc(D;\qqq)$ for an $n$-periodic link diagram $D$ shows that the 
symmetry of the diagram can be lifted to an action of the cyclic group $\z{n}$ on the Khovanov 
complex. This leads to the following conclusion.

\begin{prop}\label{prop: periodic cochain complex}
If $D$ is a periodic link diagram, then $\cc(D;\qqq)$ is a complex of graded 
$\gpAlg{\z{n}}$-modules.
\end{prop}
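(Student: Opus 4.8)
The plan is to unwind the definitions and check that the $\z{n}$-action is compatible with the differential and the grading. First I would recall Khovanov's cube-of-resolutions construction: for a diagram $D$ with $N$ crossings, $\cc(D;\qqq)$ has $\cc^i(D;\qqq) = \bigoplus_{|v|=i+n_-(D)} V(D_v)$, where $v$ ranges over vertices of the cube $\{0,1\}^N$, $D_v$ is the corresponding complete resolution, and $V(D_v)$ is the tensor product of one copy of the Frobenius algebra $A=\qqq[X]/(X^2)$ per circle of $D_v$ (with an overall grading shift). The differential is the signed sum of edge maps, each of which is a multiplication or comultiplication applied to the circles affected by changing one $0$ to a $1$.

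Next I would describe the action. Since $D$ is $n$-periodic, the order-$n$ rotation permutes the crossings of $D$ in orbits of size $n$ (the axis is disjoint from $L$, so no crossing is fixed), hence induces an action of $\z{n}$ on the vertex set $\{0,1\}^N$ of the cube that preserves $|v|$, and for each $v$ a bijection between the circles of $D_v$ and those of $D_{gv}$ for the generator $g$. This gives, for each $v$, an isomorphism $V(D_v)\to V(D_{gv})$ permuting tensor factors; assembling these over all $v$ of a fixed weight yields a $\qqq$-linear automorphism of each $\cc^i(D;\qqq)$. One then checks this automorphism preserves the quantum grading (permuting tensor factors and relabelling circles does not change the number of $1$- vs $X$-labels, and the shifts $n_\pm(D)$ are rotation-invariant) and that $g^n$ acts as the identity, so we genuinely get a $\gpAlg{\z{n}}$-module structure on each $\cc^i(D;\qqq)$.

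The key step — and the main obstacle — is verifying that the edge maps commute with $g$, i.e. that $g$ is a chain map. The subtlety is the sign convention on edges of the cube: Khovanov's differential uses signs $(-1)^{\#\{k<j : v_k=1\}}$ on the edge flipping coordinate $j$, and a rotation permutes coordinates, so it does not obviously preserve these signs edge by edge. The standard resolution is that any two sign assignments making each square anticommute differ by a cochain isomorphism (a sign recalibration of the vertex spaces); I would invoke this to adjust $g$ by vertex-wise signs into an honest chain automorphism, or equivalently observe that the cube with the permuted signs is isomorphic to the original cube via a diagonal sign change. The underlying \emph{local} maps (comultiplication/multiplication) are canonically defined and manifestly commute with relabelling circles and permuting tensor factors, so once the global sign bookkeeping is handled the chain-map property follows. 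Finally, since each $\cc^i$ is a graded $\gpAlg{\z{n}}$-module and the differential is $\z{n}$-equivariant and grading-preserving, $\cc(D;\qqq)$ is a complex of graded $\gpAlg{\z{n}}$-modules, as claimed; I would note that this is exactly the construction carried out in detail in \cite{Politarczyk2}, so here it suffices to sketch it.
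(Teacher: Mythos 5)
Your overall strategy is the right one, and it is essentially the construction that the paper itself does not spell out but imports from \cite{Politarczyk2}: the rotation permutes crossings freely, hence permutes the vertices of the cube of resolutions preserving the homological weight and the quantum grading, and the local (co)multiplication maps are natural under relabelling circles, so the only real issue is the edge signs. You correctly identify that issue, but your resolution of it leaves a genuine gap: knowing that the pushed-forward sign assignment differs from the standard one by a diagonal sign change only lets you correct the permutation map $g$ to \emph{some} chain automorphism $\tilde g$ of $\cc(D;\qqq)$; it does not yet give a $\gpAlg{\z{n}}$-module structure. For that you need $\tilde g^{\,n}=\operatorname{id}$ (and, if you correct each power of $g$ separately, that the corrections compose coherently). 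What the recalibration argument gives a priori is only that $\tilde g^{\,n}$ is a diagonal $\pm 1$ chain automorphism; since every edge map of the Khovanov cube (multiplication or comultiplication) is nonzero and the cube is connected, such an automorphism is $\pm\operatorname{id}$ globally, so without further work you may only have produced an action of an extension of $\z{n}$ by $\{\pm 1\}$, i.e.\ a projective action, which is not what the proposition asserts.

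This gap is fixable, but it requires an extra step that your sketch omits. If $\tilde g^{\,n}=-\operatorname{id}$ and $n$ is odd, replacing $\tilde g$ by $-\tilde g$ repairs the order; alternatively, and more robustly, one can avoid the problem from the start by making an equivariant choice in the construction itself (for instance ordering the crossings orbit by orbit, compatibly with the rotation, or defining the complex of the periodic diagram by composing the Bar-Natan bracket of the fundamental tangle along the $n$-periodic planar diagram, so that the generator visibly acts by cyclically permuting identical factors and has order $n$ on the nose). Since the proposition is stated for arbitrary $n$, and the cheap global-sign fix only works for odd $n$, some argument of this kind is needed; this is precisely the point where the detailed treatment in \cite{Politarczyk2}, to which the paper defers, does more than your sketch. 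So: right approach, but "adjust $g$ by vertex-wise signs into an honest chain automorphism" is not yet "an action of $\z{n}$", and that implication is the step you must still supply.
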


This leads to the definition of the equivariant Khovanov homology.

\begin{defn}
Define the rational equivariant Khovanov homology of an $n$-periodic diagram $D$ to be the 
following triply-graded module, for which the third grading is supported only for $d \mid n$.
\begin{align*}
\kh_{\z{n}}^{\ast,\ast,d}(D;\qqq) &= 
H^{\ast,\ast}\left(\Hom{}{\cyclicGroupAlg{n}}{\cyclotomicField{d}}{\cc(D; \qqq)}\right) \\
&\cong H^{\ast,\ast}\left(\cyclotomicField{d} \otimes_{\cyclicGroupAlg{n}} \cc(D;\qqq)\right),
\end{align*}
where $\cc(D;\qqq)$ is the Khovanov complex of $D$ with rational coefficients. It is worth to 
notice, that since $\cc(D)$ is a complex of graded modules, Ext groups become also naturally graded, 
provided that we regard $\gpRing{\xi_{d}}$ as a graded module concentrated in degree~$0$.
\end{defn}

The equivariant Khovanov homology depends only on the equivariant isotopy class of the link 
represented by the diagram i.e. we allow deformations of the diagram which commute with the action 
of the cyclic group. Such deformations can be realized as a composition of equivariant Reidemeister 
moves.

\begin{thm}\label{thm:invariance_equivariant_homology}
Equivariant Khovanov homology groups are invariants of periodic links, that is they are invariant 
under equivariant Reidemeister moves.
\end{thm}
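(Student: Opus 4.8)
The plan is to establish invariance under equivariant Reidemeister moves by building on the non-equivariant invariance of the Khovanov complex up to homotopy equivalence, upgraded to the equivariant setting. The key observation is that equivariant Khovanov homology was defined as the homology of $\Hom{}{\cyclicGroupAlg{n}}{\cyclotomicField{d}}{\cc(D;\qqq)}$ (equivalently, of $\cyclotomicField{d} \otimes_{\cyclicGroupAlg{n}} \cc(D;\qqq)$), so it suffices to show that an equivariant Reidemeister move induces a $\z{n}$-equivariant chain homotopy equivalence $\cc(D;\qqq) \simeq \cc(D';\qqq)$ of complexes of graded $\gpAlg{\z{n}}$-modules. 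Once we have such an equivalence, applying the functor $\cyclotomicField{d} \otimes_{\cyclicGroupAlg{n}} (-)$ (or the corresponding $\Hom$ functor) preserves chain homotopy equivalences, hence induces an isomorphism on homology in each of the three gradings.

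First I would recall the Bar-Natan style description of the Khovanov complex as a complex in the category $\Kob{}$ of formal cobordisms, and note that a periodic diagram $D$ is assembled from $n$ copies of a tangle $T$ plugged into a $\z{n}$-symmetric planar diagram $D_n$. An equivariant Reidemeister move is, by definition, a simultaneous application of the same Reidemeister move in all $n$ copies of $T$, i.e. it takes place inside the orbit of input disks. Using the planar-algebra structure of the Khovanov complex (functoriality of the Bar-Natan bracket under planar composition), the local chain homotopy equivalence $\bnbracket{T} \simeq \bnbracket{T'}$ realizing the ordinary Reidemeister move can be tensored with itself $n$ times and inserted into the disks of $D_n$; the diagonal $\z{n}$-action on the $n$ tensor factors, permuting the copies, is then respected by construction. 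Concretely, if $f \colon \bnbracket{T} \to \bnbracket{T'}$ and $g$ in the reverse direction, with homotopies $h, h'$, are the data of the non-equivariant move, then $f^{\otimes n}$, $g^{\otimes n}$ together with the standard homotopies built from $h, h'$ (summed appropriately over the filtration of the tensor product) give a $\z{n}$-equivariant homotopy equivalence at the level of the orbit, hence of $\cc(D;\qqq)$ after applying the tautological TQFT functor and collecting the $n$ remaining peripheral tangles into $D_n$.

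Then I would check that the resulting chain maps and homotopies are genuinely maps of $\gpAlg{\z{n}}$-modules and preserve the quantum grading up to the usual degree shifts; this is where one uses that the generator of $\z{n}$ acts precisely by cyclically permuting the $n$ tensor factors while acting as the identity on $D_n$, so the diagonal map commutes with everything in sight. Finally, invariance of $\kh_{\z{n}}^{\ast,\ast,d}$ follows formally: apply $\cyclotomicField{d}\otimes_{\cyclicGroupAlg{n}}(-)$, which is additive, hence sends the equivariant homotopy $f^{\otimes n}g^{\otimes n} \simeq \mathrm{id}$ to an honest homotopy, giving the desired isomorphism of triply-graded spaces.

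The main obstacle, and the step that needs genuine care rather than formal nonsense, is constructing the equivariant homotopies from the non-equivariant ones: a naive $h^{\otimes n}$ is not a homotopy for $f^{\otimes n}g^{\otimes n}$, so one has to use the standard telescoping formula $\sum_{k} (gf)^{\otimes k} \otimes h \otimes \mathrm{id}^{\otimes(n-1-k)}$ (or its symmetrized variant) and verify both that it does the job and that it is invariant under the cyclic permutation — which in general it is only after symmetrizing, and symmetrizing over $\z{n}$ is harmless precisely because we work over $\qqq$, where $n$ is invertible. Verifying that this symmetrized homotopy still satisfies the homotopy identity, and that no grading shifts go wrong, is the technical heart; everything else is bookkeeping about planar algebras and the behavior of $\otimes_{\cyclicGroupAlg{n}}$. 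I would remark that full details appear in \cite{Politarczyk2}, and here only indicate the structure of the argument.
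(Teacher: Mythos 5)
This theorem is stated in the paper as a survey result imported from \cite{Politarczyk2}; the paper itself contains no proof to compare against, so I can only assess your sketch on its own terms. Your strategy is sound and is essentially the standard one: realize the orbit of a Reidemeister move by inserting the same local Bar-Natan equivalence $f\colon\bnbracket{T}\to\bnbracket{T'}$ into each input disk of the symmetric planar diagram, observe that the resulting map on $\cc(D;\qqq)$ commutes with the cyclic action (this is where you should say a word about the sign conventions implicit in Proposition \ref{prop: periodic cochain complex} --- the action permutes the factors only up to Koszul signs and a choice of crossing ordering, and the check that a homological-degree-zero map commutes with the signed permutation deserves a sentence), and then transport the equivalence through $\cyclotomicField{d}\otimes_{\cyclicGroupAlg{n}}(-)$. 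Two remarks on efficiency and scope. First, over $\qqq$ you do not actually need the part you identify as the technical heart: since $\cyclicGroupAlg{n}$ is semisimple, the functor $\cyclotomicField{d}\otimes_{\cyclicGroupAlg{n}}(-)$ is exact (equivalently, by Proposition \ref{prop: structure rational equiv kh} the equivariant homology is the isotypic summand $e_{d}\kh^{\ast,\ast}(D;\qqq)$), so an equivariant chain map that is merely a non-equivariant quasi-isomorphism already induces an isomorphism on each summand; the telescoping homotopy need not be made equivariant at all. Second, your symmetrization trick, averaging the homotopy over $\z{n}$, is correct here precisely because $n$ is invertible in $\qqq$, but it would not suffice for the integral (or derived-functor) version of the theory in \cite{Politarczyk2}, where one instead uses that hyper-Ext over the group ring is invariant under equivariant quasi-isomorphisms; so your argument proves the rational statement as formulated in this paper, not the stronger statement of the source. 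With the equivariance-of-signs point addressed, your outline is correct.
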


Semi-simplicity of the group algebra has a number of implications regarding the structure of the 
rational equivariant Khovanov homology.

\begin{prop}\label{prop: structure rational equiv kh}
Let $D$ be an $n$-periodic link diagram.
\begin{enumerate}
\item The equivariant Khovanov homology can be computed in the following way
\begin{align*}
\kh_{\z{n}}^{\ast,\ast,d}(D; \qqq) &\cong \cyclotomicField{d} \otimes_{\cyclicGroupAlg{n}} 
\kh^{\ast,\ast}(D;\qqq) \\
&\cong e_{d}\kh^{\ast,\ast}(D;\qqq),
\end{align*}
where $e_{d} \in \cyclicGroupAlg{n}$ is a central idempotent, which acts as an identity on an 
irreducible summand of isomorphic to $\cyclotomicField{d}$ and annihilates the complementary 
summand.
\item There exists a decomposition of $\cyclicGroupAlg{n}$-modules
$$\kh^{\ast,\ast}(D;\qqq) = \bigoplus_{d \mid n} \kh_{\z{n}}^{\ast,\ast,d}(D;\qqq).$$
\item If for any $i,j$
$$\dim \kh^{i,j}(D) < \varphi(d),$$
where $\varphi$ is the Euler's totient functions, then
$$\kh_{\z{n}}^{\ast,\ast,d}(D;\qqq) = 0.$$
\end{enumerate}
\end{prop}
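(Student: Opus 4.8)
The plan is to deduce all three statements from a single structural fact: the rational group algebra $\cyclicGroupAlg{n}$ is semisimple, by Maschke's theorem, since $\qqq$ has characteristic zero. Concretely, Wedderburn's theorem together with the decomposition of $\qqq$-representations of $\z{n}$ yields a ring isomorphism $\cyclicGroupAlg{n} \cong \prod_{d \mid n}\cyclotomicField{d}$; let $e_{d} \in \cyclicGroupAlg{n}$ denote the central idempotent corresponding to the factor $\cyclotomicField{d}$, so that $e_{d}e_{d'} = 0$ for $d \neq d'$, $\sum_{d \mid n} e_{d} = 1$, and $e_{d}\cyclicGroupAlg{n} \cong \cyclotomicField{d}$ as $\cyclicGroupAlg{n}$-modules. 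First I would record the purely algebraic consequence that for any $\cyclicGroupAlg{n}$-module $M$ there are natural identifications $\cyclotomicField{d} \otimes_{\cyclicGroupAlg{n}} M \cong e_{d}M$ and, since $e_{d}$ is central, $\Hom{}{\cyclicGroupAlg{n}}{\cyclotomicField{d}}{M} \cong e_{d}M$ as well. Because the lifted $\z{n}$-action on $\cc(D;\qqq)$ preserves the quantum grading (Proposition \ref{prop: periodic cochain complex}) and $e_{d}$ acts in homological and quantum degree $0$, all of these identifications will be compatible with the bigrading.

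Next I would prove (1). The functor $M \mapsto e_{d}M$ is just multiplication by a central idempotent, hence it splits off a direct summand of every module and of every chain complex; in particular it is exact and commutes with the homology functor. Applying it to $\cc(D;\qqq)$ therefore gives $\kh_{\z{n}}^{\ast,\ast,d}(D;\qqq) = H^{\ast,\ast}(e_{d}\,\cc(D;\qqq)) \cong e_{d}\,H^{\ast,\ast}(\cc(D;\qqq)) = e_{d}\,\kh^{\ast,\ast}(D;\qqq)$, and the remaining isomorphism in the statement is the identification $e_{d}\,\kh^{\ast,\ast}(D;\qqq) \cong \cyclotomicField{d} \otimes_{\cyclicGroupAlg{n}} \kh^{\ast,\ast}(D;\qqq)$ from the first paragraph. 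For (2) I would apply the orthogonal idempotent decomposition $1 = \sum_{d \mid n} e_{d}$ to the $\cyclicGroupAlg{n}$-module $M = \kh^{\ast,\ast}(D;\qqq)$, obtaining $\kh^{\ast,\ast}(D;\qqq) = \bigoplus_{d \mid n} e_{d}\,\kh^{\ast,\ast}(D;\qqq)$, which by (1) is precisely the claimed grading-respecting decomposition of $\cyclicGroupAlg{n}$-modules.

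Finally, for (3), fix a bidegree $(i,j)$. By (1) the space $\kh_{\z{n}}^{i,j,d}(D;\qqq)$ is a vector space over the field $\cyclotomicField{d}$, which has dimension $[\cyclotomicField{d}:\qqq] = \varphi(d)$ over $\qqq$; hence $\dim_{\qqq}\kh_{\z{n}}^{i,j,d}(D;\qqq)$ is a nonnegative integer multiple of $\varphi(d)$. On the other hand, by (2) it is a $\qqq$-linear direct summand of $\kh^{i,j}(D;\qqq)$, so $\dim_{\qqq}\kh_{\z{n}}^{i,j,d}(D;\qqq) \leq \dim_{\qqq}\kh^{i,j}(D;\qqq) < \varphi(d)$ by hypothesis. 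The only multiple of $\varphi(d)$ that is strictly smaller than $\varphi(d)$ is $0$, so $\kh_{\z{n}}^{i,j,d}(D;\qqq) = 0$ for every $(i,j)$, i.e. $\kh_{\z{n}}^{\ast,\ast,d}(D;\qqq) = 0$. I do not expect a serious obstacle: the argument is essentially formal once semisimplicity is in place, and the only point requiring genuine care is checking that the idempotent splitting respects both gradings, which follows from the fact (recalled in Proposition \ref{prop: periodic cochain complex}) that the $\z{n}$-action on the Khovanov complex is grading preserving.
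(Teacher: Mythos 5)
Your proposal is correct and follows essentially the same route as the paper: semisimplicity of $\cyclicGroupAlg{n}$ with its Wedderburn decomposition and central idempotents $e_{d}$ gives parts (1) and (2), and part (3) is the same dimension count, namely that a $\cyclotomicField{d}$-vector space sitting inside $\kh^{i,j}(D;\qqq)$ has $\qqq$-dimension a multiple of $\varphi(d)$ yet smaller than $\varphi(d)$, hence is zero. You merely spell out the details (exactness of $M\mapsto e_{d}M$, the identifications $\cyclotomicField{d}\otimes_{\cyclicGroupAlg{n}}M\cong e_{d}M\cong\Hom{}{\cyclicGroupAlg{n}}{\cyclotomicField{d}}{M}$, and grading compatibility) that the paper leaves implicit.
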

\begin{proof}
The first two statements are easy consequences of the existence of the Wedderburn decomposition of 
the group algebra and of the Schur's Lemma. The third one follows from the second, because 
$\kh_{\z{n}}^{\ast,\ast,d}(D;\qqq)$ is a $\cyclotomicField{d}$ vector spaces and
$$\dim \cyclotomicField{d} = \varphi(d).$$
\end{proof}

As it was shown in \cite{Politarczyk2} these properties can be used to determine the equivariant 
Khovanov homology in some cases.

\begin{cor}
Let $T(n,2)$ be the torus link. Let $d > 2$ be a divisor of $n$. According to Example
\ref{ex: periodic torus links}, $T(n,2)$ is $d$-periodic. Let $d' > 2$ and $d' \mid d$.
$$\kh^{\ast,\ast,d'}_{\z{d}}(T(n,2); \qqq) = 0.$$
\end{cor}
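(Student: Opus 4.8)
The plan is to combine the known Khovanov homology of the torus link $T(n,2)$ with part (3) of Proposition \ref{prop: structure rational equiv kh}. Recall that $\kh^{i,j}(T(n,2);\qqq)$ is well understood: apart from two one-dimensional groups sitting in homological degree $0$ (coming from the two "extreme" generators, i.e. the summand detected by the Lee deformation), every nonzero Khovanov group of $T(n,2)$ is one-dimensional, and these one-dimensional groups are distributed so that no single bidegree $(i,j)$ carries dimension exceeding $1$. In particular, for \emph{every} $i,j$ one has $\dim_{\qqq}\kh^{i,j}(T(n,2);\qqq)\le 2$. First I would record this dimension bound precisely, citing the computation of the Khovanov homology of $(2,n)$ torus links (e.g. from \cite{Politarczyk2} or the original Khovanov reference), and note that $2 \le \varphi(d')$ whenever $d' > 2$, since $\varphi(d') \ge 2$ for all $d' \ge 3$.

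Next I would apply Proposition \ref{prop: structure rational equiv kh}(3) with the group $\z{d}$ acting (so the relevant totient is $\varphi(d')$ for the divisor $d' \mid d$): since $\dim_{\qqq}\kh^{i,j}(T(n,2);\qqq) \le 2 < \varphi(d')$ fails to be strict only if $\varphi(d') = 2$, I must be slightly careful. The bound $\varphi(d') = 2$ occurs exactly for $d' \in \{3,4,6\}$, so the naive dimension count gives the vanishing immediately only when $\varphi(d') \ge 3$, i.e. $d' \notin \{3,4,6\}$. Thus the main obstacle is the borderline cases $d' \in \{3,4,6\}$, where a bidegree carrying a $2$-dimensional Khovanov group could a priori support a copy of $\cyclotomicField{d'}$.

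To handle the borderline cases I would look more closely at \emph{where} the $2$-dimensional groups of $\kh^{\ast,\ast}(T(n,2);\qqq)$ sit: they occur only in homological degree $i = 0$ (the pair of generators surviving to Lee homology), and there the $\z{d}$-action on the Khovanov chain complex fixes these generators, because the equivariant structure is induced by the rotational symmetry permuting the orbit of crossings and the two extreme Kauffman states (all-zero and all-one smoothings) are themselves symmetric and their associated generators are permuted trivially. Hence the $\z{d}$-representation on $\kh^{0,\ast}(T(n,2);\qqq)$ is a sum of trivial representations, which is the $d' = 1$ isotypic component; it contributes nothing to $\kh^{\ast,\ast,d'}_{\z{d}}$ for $d' > 1$. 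Every other bidegree is at most $1$-dimensional, hence cannot support $\cyclotomicField{d'}$ for $d' > 2$ by the same argument as before. Combining, $e_{d'}\kh^{\ast,\ast}(T(n,2);\qqq) = 0$, which by Proposition \ref{prop: structure rational equiv kh}(1) is exactly the assertion.

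Alternatively, and more cleanly, one may avoid the case analysis entirely by invoking the explicit computation of the equivariant Khovanov homology of $T(n,2)$ carried out in \cite{Politarczyk2}: there the full decomposition $\kh^{\ast,\ast}(T(n,2);\qqq) = \bigoplus_{d \mid n}\kh^{\ast,\ast,d}_{\z{n}}(T(n,2);\qqq)$ is determined, and one reads off that the summands for divisors $d > 2$ already vanish at the level of the $\z{n}$-action; restricting the action to $\z{d} \subseteq \z{n}$ (equivalently, using that $e_{d'}$ for $d' \mid d$ is a sum of the $\z{n}$-idempotents $e_{d''}$ over $d'' \mid n$ with the appropriate compatibility) then forces $\kh^{\ast,\ast,d'}_{\z{d}}(T(n,2);\qqq) = 0$. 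I expect the dimension-count argument to be the shortest self-contained route, with the trivial-action observation in homological degree $0$ being the only genuinely new input needed beyond Proposition \ref{prop: structure rational equiv kh}.
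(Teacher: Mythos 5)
Your core argument is the same as the paper's: bound the dimensions of $\kh^{i,j}(T(n,2);\qqq)$ and apply part (3) of Proposition \ref{prop: structure rational equiv kh} together with $\varphi(d')\geq 2$ for $d'>2$. The paper does exactly this in one line, quoting Khovanov's computation (Prop.~35 of \cite{Khovanov1}) that $\dim_{\qqq}\kh^{i,j}(T(n,2);\qqq)\leq 1$ for \emph{all} $i,j$; since $1<\varphi(d')$ whenever $d'>2$, the vanishing is immediate. Your write-up actually states this sharp bound in its first sentence (no single bidegree exceeds dimension $1$), but then you needlessly retreat to the weaker bound $\leq 2$ and launch a case analysis for $\varphi(d')=2$, i.e.\ $d'\in\{3,4,6\}$. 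That detour is both unnecessary and the weakest part of the proposal: the two ``Lee'' generators in homological degree $0$ sit in \emph{distinct} quantum gradings, so there is no $2$-dimensional bidegree to worry about, and your patch for the phantom borderline case (triviality of the $\z{d}$-action on $\kh^{0,\ast}$ because the extreme Kauffman states are symmetric) conflates a chain-level observation with a statement about the induced action on homology and would need more justification if it were actually needed. Dropping everything after your first sentence, and citing the $\leq 1$ bound precisely, gives the paper's proof; your alternative route via the full equivariant computation in \cite{Politarczyk2} also works but is heavier machinery than the statement requires.
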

\begin{proof}
Indeed, because according to \cite{Khovanov1}*{Prop. 35} for all $i,j$ we have
$$\dim_{\qqq} \kh^{i,j}(T(n,2); \qqq) \leq 1$$
and $\varphi(d') > 1$ if $d' > 2$.
\end{proof}

\begin{cor}
Let $\gcd(3,n) = 1$. The $3$-equivariant Khovanov homology 
$\kh^{\ast,\ast,3}_{\z{3}}(T(n,3);\qqq)$ of the torus knot $T(n,3)$ vanishes.

\noindent If $d > 2$ divides $n$, $d' > 2$ and $d' \mid d$, then 
$\kh^{\ast,\ast,d'}_{\z{d}}(T(n,3); \qqq) = 0$.
\end{cor}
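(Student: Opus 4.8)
The plan is to deduce both statements from part~(3) of Proposition~\ref{prop: structure rational equiv kh}: if $\dim_{\qqq}\kh^{i,j}(D;\qqq)<\varphi(d)$ for every bidegree $(i,j)$, then the $d$-isotypic summand $\kh^{\ast,\ast,d}_{\z{n}}(D;\qqq)$ vanishes. Since $\varphi(d')\ge 2$ for every $d'>2$ (with equality exactly for $d'\in\{3,4,6\}$), it suffices to prove the pointwise bound $\dim_{\qqq}\kh^{i,j}(T(n,3);\qqq)\le 1$ for all $i,j$ whenever $\gcd(n,3)=1$ --- the exact analogue, for the family $T(n,3)$, of the estimate $\dim_{\qqq}\kh^{i,j}(T(n,2);\qqq)\le 1$ from \cite{Khovanov1}*{Prop.~35} that was used for $T(n,2)$. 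Granting it, the first assertion is the case $d=d'=3$, namely the order-$3$ symmetry that $T(n,3)$ carries because $3\mid 3$ (whose quotient is the unknot), since $\varphi(3)=2>1$; and the second assertion follows by applying Proposition~\ref{prop: structure rational equiv kh}(3) once for each divisor $d'>2$ of $d$, now for the $\z{d}$-symmetry coming from $d\mid n$. The bound concerns the ordinary Khovanov homology, so it is insensitive to which of these symmetries is used.

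The work therefore reduces to the nonequivariant fact that $\kh^{i,j}(T(3,n);\qqq)$ has rank at most one in every bidegree. I would assemble this from the existing computations of the rational Khovanov homology of $(3,n)$-torus knots --- Turner's spectral sequence, Sto\v{s}i\'c's stability and recursion results for torus links, and Gillam's explicit description of the integral homology --- as follows. The reduced homology $\widetilde{\kh}(T(3,n);\qqq)$ is supported on three adjacent $\delta$-diagonals, and along them the ranks form a ``staircase'', hence are all $\le 1$; passing to the unreduced theory through the long exact sequence relating $\widetilde{\kh}$ and $\kh$ only replaces each staircase generator by its two neighbouring internal degrees, and for $T(3,n)$ these shifted copies land in pairwise distinct bidegrees, so each unreduced bidegree still has rank $\le 1$. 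For $n$ below the stable range this is a finite check; for large $n$ one reduces to that finite list by Sto\v{s}i\'c's stability theorem, which fixes the homology in any bounded window of homological degrees.

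The one genuinely delicate point --- and the step I expect to be the main obstacle --- is proving, uniformly in $n$, that the two shifted copies of the reduced staircase never collide and that no rank-$2$ ``box'' summand is created; this forces one to work with the actual recursion for $\kh(T(3,n))$ rather than with its qualitative shape. Once this bookkeeping is complete, the corollary is an immediate application of Proposition~\ref{prop: structure rational equiv kh}(3), exactly as in the $T(n,2)$ case. (If some $n$ did produce an isolated bidegree of rank $2$, the cases $d'\in\{3,4,6\}$ would instead require the finer input that the $\z{d}$-module there is a sum of one-dimensional characters and not a copy of $\cyclotomicField{d'}$; but I expect the clean rank-$\le 1$ statement to hold in all cases.)
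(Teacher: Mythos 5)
Your reduction is exactly the paper's: both assertions follow from Proposition \ref{prop: structure rational equiv kh}(3) once one knows that $\dim_{\qqq}\kh^{i,j}(T(n,3);\qqq)\le 1$ for all $i,j$ when $\gcd(3,n)=1$, since $\varphi(d')\ge 2$ for every $d'>2$; your handling of the two cases (the $\z{3}$-symmetry with $d=d'=3$, and the $\z{d}$-symmetry from $d\mid n$ with each $d'\mid d$, $d'>2$) matches the intended argument. The only divergence is in how the rank bound is obtained. The paper does not reprove it: it simply cites Turner's computation (\cite{Turner2}, Thm.\ 3.1), which gives the rational, \emph{unreduced} Khovanov homology of $(3,q)$-torus links explicitly and in particular shows that every bidegree has dimension at most $1$ precisely when $\gcd(3,n)=1$. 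Consequently the step you single out as the main obstacle --- the uniform ``no collision / no rank-$2$ box'' bookkeeping when passing from the reduced staircase to the unreduced theory, plus an appeal to Sto\v{s}i\'c stability --- is not needed at all, and as written it is the one genuine gap in your proposal: you assert the bound but defer its proof to a program you acknowledge you have not carried out. The gap is fillable by citation rather than by new work, so your argument becomes complete (and identical in substance to the paper's) once you replace that sketch with the reference to Turner's theorem; note also that since Turner's result is stated for the unreduced theory, no reduced-to-unreduced long exact sequence argument is required.
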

\begin{proof}
  Indeed, because \cite{Turner2}*{Thm 3.1} implies that for all $i,j$ we have
  $$\dim_{\qqq}   \kh^{i,j}(T(n,3); \qqq) \le 1,$$
  provided that $\gcd(3,n)=1$.
\end{proof}

Let $T_{kp^{n}+f}$ be a crossingless diagram of the trivial link with $kp^{n}+f$ components, for a 
prime $p$ and non-negative integers $k,n,f$. This link is $p^{n}$-periodic in such a way that its 
components can be divided into two families. The first family contains $kp^{n}$ components, which 
are freely permuted, and the second one contains the remainder components which are linked with the 
fixed point axis and are rotated by the diffeomorphism generating the action of the cyclic group. 
In order to state the result of the computation of the equivariant Khovanov homology of 
$T_{kp^{n}+f}$, let us introduce some notation.

\begin{defn}\label{defn:diff polynomials trivial links}
Define a sequence of Laurent polynomials i.e. elements of the ring $\gpRing{q,q^{-1}}$.
\begin{align*}
\mathcal{P}_{0}(q) &= q+q^{-1}\\
\mathcal{P}_{n}(q) &= \frac{1}{p^n}\sum_{\stackrel{1 \leq k \leq 
p^n - 1}{\gcd(k,p^n)=1}}\binom{p^n}{k} q^{2k - p^n} + \\
&+ \frac{1}{p^n}\sum_{1 \leq s < n} \sum_{\stackrel{1 \leq k \leq p^n-1}{\gcd(k,p^n) = p^s}} 
\left(\binom{p^n}{k} - \binom{p^{n-s}}{k'}\right) q^{2k - p^n},
\end{align*}
where $k' = k/p^{s}$ and $n \geq 1$.
\end{defn}

\begin{defn}
If $M^{\ast}$ is a graded $\qqq$ vector space, then define its quantum dimension as in 
\cite{Turner2}.
$$\qdim M^{\ast} = \sum_{i} q^{i} \dim M^{i} \in \gpRing{q,q^{-1}}.$$
\end{defn}

\begin{defn}
Let $M_{s}^{k,f}$ be a graded $\qqq$ vector space whose quantum dimension satisfies the following 
equality
\begin{align*}
&\qdim M_{s}^{k,f} = \\
&=(q+q^{-1})^{f}\sum_{\ell = 1}^{k} p^{s \cdot( \ell - 1)} 
\mathcal{P}_{s}(q^{p^{n-s}})^{\ell}  \sum_{\stackrel{0 \leq i_0, \ldots, i_{s-1} \leq k}{i_0 + 
\ldots + i_{s-1} = k - \ell}} \prod_{j=0}^{s-1} \left( p^j \mathcal{P}_{j}(q^{p^{n-j}}) 
\right)^{i_j}.
\end{align*}
\end{defn}

\begin{prop}\label{prop: equiv kh trivial links}
The rational Khovanov homology of the trivial link $T_{kp^{n}+f}$, for some prime $p$, which 
possesses $k$ free orbits of components and $f$ fixed circles, is given by the following formula
$$\kh_{\z{p^{n}}}^{0,\ast,p^{n-u}}(T_{kp^{n}+f}; \qqq) = \bigoplus_{s=n-u}^{n} 
(M_{s}^{k,f})^{\varphi(p^{n-u})}.$$
\end{prop}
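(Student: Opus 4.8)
The plan is to reduce the computation of $\kh_{\z{p^{n}}}^{0,\ast,p^{n-u}}(T_{kp^{n}+f};\qqq)$ to the ordinary rational Khovanov homology of the trivial link together with the decomposition from Proposition~\ref{prop: structure rational equiv kh}. Since $T_{kp^{n}+f}$ is a crossingless diagram, its Khovanov complex is concentrated in homological degree $0$ and equals $V^{\otimes(kp^{n}+f)}$, where $V$ is the Frobenius algebra with $\qdim V = q+q^{-1}$. Thus $\kh^{0,\ast}(T_{kp^{n}+f};\qqq) \cong V^{\otimes(kp^{n}+f)}$ as a graded vector space, and the whole content of the proposition is the identification of the $\z{p^{n}}$-module structure of this tensor power, where the group acts by permuting the $kp^{n}$ ``free'' tensor factors in $k$ regular orbits and fixing (pointwise, up to the circle-rotation which acts trivially on $V$) the remaining $f$ factors. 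First I would factor out the $f$ fixed circles, which contribute a tensor factor $V^{\otimes f}$ with trivial action; this is the source of the $(q+q^{-1})^{f}$ prefactor in $\qdim M_{s}^{k,f}$. What remains is to compute the $e_{p^{n-u}}$-isotypic component of $V^{\otimes kp^{n}}$ under the permutation action of $\z{p^{n}}$ on $k$ free orbits.

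The next step is the orbit decomposition of $V^{\otimes kp^{n}}$. Writing the $k$ orbits as blocks, $\z{p^{n}}$ acts diagonally on $W^{\otimes k}$ where $W = V^{\otimes p^{n}}$ carries the regular-permutation action of $\z{p^{n}}$ on $p^{n}$ factors. The character of $W$ at a generator of the subgroup of index $p^{s}$ (i.e.\ an element of order $p^{s}$) is easy: an element that cyclically permutes the $p^{n}$ factors in $p^{s}$ cycles of length $p^{n-s}$ has trace $(q+q^{-1})^{p^{s}} = \mathcal{P}_{0}(q)^{p^{s}}$ on $W$, after tracking the quantum grading (each cycle of length $p^{n-s}$ contributes a copy of $V$ with its grading, by the standard ``necklace'' computation). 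From these traces one extracts, via the orthogonality of characters of $\z{p^{n}}$ and the explicit formula for the primitive idempotents $e_{p^{j}}$, the quantum dimension of $e_{p^{j}}W$; this is precisely where the polynomials $\mathcal{P}_{j}$ of Definition~\ref{defn:diff polynomials trivial links} come from, the binomial-coefficient sums being the Möbius-type inversion that expresses ``dimension of the $p^{j}$-isotypic piece'' in terms of the fixed-point counts $\binom{p^{n}}{k}$, with the correction terms $-\binom{p^{n-s}}{k'}$ accounting for elements whose fixed points also lie in a proper subgroup. Then $e_{p^{n-u}}(W^{\otimes k})$ is obtained by a convolution: $e_{p^{n-u}}$ applied to a $k$-fold tensor product decomposes according to how the isotypic type $p^{n-u}$ is built from the types of the $k$ tensor factors, which is exactly the multinomial sum $\sum_{i_{0}+\cdots+i_{s-1}=k-\ell}\prod (p^{j}\mathcal{P}_{j})^{i_{j}}$ over the ``lower'' factors times $\ell$ factors of type exactly $p^{s}$, summed over $s$ from $n-u$ to $n$ and over $\ell$ from $1$ to $k$. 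Matching this bookkeeping against the stated formula for $\qdim M_{s}^{k,f}$ and the outer sum $\bigoplus_{s=n-u}^{n}(M_{s}^{k,f})^{\varphi(p^{n-u})}$, with the multiplicity $\varphi(p^{n-u}) = \dim_{\qqq}\cyclotomicField{p^{n-u}}$ coming from Proposition~\ref{prop: structure rational equiv kh}(1), completes the identification.

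The main obstacle will be the combinatorial matching in the last step: organizing the convolution of isotypic components across the $k$ orbit-blocks so that the indices $\ell, i_{0},\ldots,i_{s-1}$ and the powers of $p$ come out exactly as written. The powers $p^{s\cdot(\ell-1)}$ and $p^{j i_{j}}$ are not incidental — they record the difference between the order of the relevant subgroup and the number of fixed points, i.e.\ the fact that $e_{p^{j}}$ has $\qqq$-dimension $\varphi(p^{j})$ rather than $1$, so each ``active'' orbit of isotropy type $p^{j}$ multiplies the count by $p^{j}$ (roughly), and one has to check that the correct number of these factors appears and that no double counting occurs between the $\ell$ factors of maximal active type $p^{s}$ and the $i_{j}$ factors of lower type. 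I expect this to be a careful but essentially formal induction on $k$ (or a direct generating-function argument), using at each stage the ring structure of $\gpRing{q,q^{-1}}$ and the already-established formula for $\qdim e_{p^{j}}W$; the representation-theoretic input (semisimplicity, Schur, the idempotent decomposition) is entirely supplied by Proposition~\ref{prop: structure rational equiv kh}, and the topological input is trivial because the diagram is crossingless.
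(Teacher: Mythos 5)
The paper itself gives no proof of this proposition: it is quoted verbatim from the survey section on \cite{Politarczyk2}, so there is no in-paper argument to match yours against. Judged on its own, your overall strategy is the natural (and surely the intended) one: for a crossingless diagram the Khovanov complex sits in homological degree $0$ and equals $V^{\otimes(kp^{n}+f)}$ as a graded permutation module over $\gpAlg{\z{p^{n}}}$, the $f$ fixed circles carry the trivial action and give the prefactor $(q+q^{-1})^{f}$, the exponent $\varphi(p^{n-u})=\dim_{\qqq}\cyclotomicField{p^{n-u}}$ comes from Proposition \ref{prop: structure rational equiv kh}, and what remains is a graded multiplicity count for the permutation action on the $k$ free orbits.

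There are, however, two genuine gaps. First, the one computation you actually carry out is wrong as written: an element acting with $p^{s}$ cycles of length $p^{n-s}$ has graded trace $\left(q^{p^{n-s}}+q^{-p^{n-s}}\right)^{p^{s}}=\mathcal{P}_{0}(q^{p^{n-s}})^{p^{s}}$ on $W=V^{\otimes p^{n}}$, not $(q+q^{-1})^{p^{s}}$; the invariants of a cycle of length $m$ form a copy of $V$ with its quantum grading dilated by $m$. This dilation is not a cosmetic point --- it is precisely what produces the substituted variables $q^{p^{n-s}}$ and $q^{p^{n-j}}$ in $\qdim M_{s}^{k,f}$, so with the trace as you state it the stated formula could not be recovered. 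Second, the entire content of the proposition is the identity expressing the multiplicity of $\cyclotomicField{p^{n-u}}$ in $W^{\otimes k}\otimes V^{\otimes f}$ as $\sum_{s=n-u}^{n}\qdim M_{s}^{k,f}$, and this is exactly the step you defer as an ``essentially formal induction''. Your chosen route makes it harder than it needs to be: convolving isotypic types of the $k$ blocks requires the decomposition of $\cyclotomicField{p^{a}}\otimes_{\qqq}\cyclotomicField{p^{b}}$ as a $\gpAlg{\z{p^{n}}}$-module (for $a=b$ this produces lower types with nontrivial multiplicities), which you neither state nor prove. The efficient argument avoids characters altogether: $V^{\otimes(kp^{n}+f)}$ is a permutation module on its basis of $\{1,x\}$-labelings, $\gpAlg{\z{p^{n}}/\z{p^{\sigma}}}$ contains $\cyclotomicField{p^{j}}$ exactly once for $j\leq n-\sigma$, so the multiplicity of $\cyclotomicField{p^{n-u}}$ is the graded number of basis orbits whose isotropy has order at most $p^{u}$, and one then counts labelings according to the exact isotropy of each of the $k$ blocks (this is where the polynomials $\mathcal{P}_{j}$, as graded counts of free orbits of labelings of $\z{p^{j}}$, and the powers of $p$ converting labelings into orbits, enter). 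As it stands, your sketch does not verify that this bookkeeping closes up --- for instance, distributing the $k$ blocks among isotropy types produces binomial factors that have to be reconciled with the displayed expression for $\qdim M_{s}^{k,f}$ --- so the heart of the proposition remains unproven.
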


\cite{Politarczyk2} contains also a construction of a spectral sequence converging to the 
equivariant Khovanov homology, which is a substitute for the long exact sequence of Khovanov 
homology. First, however, we need to introduce some notation.

Start with a link $L$ and its $n$-periodic diagram $D$. Choose a subset of crossings $X 
\subset \cross{D}$.

\begin{defn}
Let $\alpha \colon \cross(D) \to \{0,1,x\}$ be a map.
\begin{enumerate}
\item If $i \in \{0,1,x\}$ define $|\alpha|_{i} = \#\alpha^{-1}(i)$. 
\item Define the support of $\alpha$ to be $\supp \alpha = \alpha^{-1}(\{0,1\})$.
\item Define also the following family of maps
\begin{align*}
\mathcal{B}_k(X) &= \{\alpha \colon \cross(D) \to \{0,1,x\} \quad | \quad \supp \alpha = X, \quad 
|\alpha|_{1} 
= k \}.
\end{align*}
\item  Denote by $D_{\alpha}$ the diagram obtained from $D$ by resolving crossings from 
$\alpha^{-1}(0)$ by $0$-\hyphenation{smooth-ing} and from $\alpha^{-1}(1)$ by $1$-smoothing. 
\end{enumerate}
\end{defn}

\begin{defn}
If $D$ is a link diagram, $X \subset \cross{D}$ and $\alpha \in \mathcal{D}_{k}(X)$, for some $k$, 
define
$$c(D_{\alpha}) = n_{-}(D_{\alpha}) - n_{-}(D),$$
where $n_{-}(D_{\alpha})$ denotes the number of negative crossings of $D_{\alpha}$.
\end{defn}

\begin{defn}
Let $M^{\ast,\ast}$ be a bi-graded module. For $m,n \in \zzz$ define a new bi-graded module 
$M[n]\{m\}$ in the following way.
$$M[n]\{m\}^{i,j} = M^{i-n,j-m}.$$
\end{defn}

The construction of the spectral sequence starts with a choice of a single orbit of crossings of a 
periodic diagram $D$. To preserve the symmetry, we have to consider all resolutions of the 
crossings from the chosen orbit. Khovanov complex of $D$ can be made into a bicomplex whose columns 
are expressible in terms of the Khovanov complexes of the diagrams obtained from $D$ by resolving 
only crossings from the chosen orbit. This leads to the following spectral sequence.

\begin{thm}\label{thm:spectral_sequence_equivariant_homology}
Let $L$ be a $p^{n}$-periodic link, where $p$ is an odd prime, and let $X \subset \cross{D}$ 
consists of a single orbit. Under this assumption, for any $0 \leq s \leq n$ there exists a 
spectral sequence $\{{}_{p^{n-s}}E_{r}^{\ast,\ast}, d_r\}$
of graded modules converging to $\kh_{\z{p^{n}}}^{\ast,\ast,p^{n-s}}(D;\qqq)$ with
\begin{align*}
{}_{p^{n-s}}E^{0,j}_{1} &= \kh_{\z{p^n}}^{j,\ast,p^{n-s}}(D_{\alpha_0};\qqq)[c(D_{\alpha_0})]\{ 
q(\alpha_{0}) \}, \\
{}_{p^{n-s}}E^{p^{n}, j}_{1} &= \kh_{\z{p^n}}^{j, \ast, p^{n-s}} (D_{\alpha_{1}};\qqq) 
[c(D_{\alpha_1})] \{ q(\alpha_{1}) \},\\
{}_{p^{n-s}}E^{i,j}_{1} &= \bigoplus_{0 \leq v \leq u_{i}} \bigoplus_{\alpha \in 
\overline{\mathcal{B}}_{i}^{p^{v}}(X)} \kh_{\z{p^{v}}}^{j, \ast, 
k(v,s)}(D_{\alpha};\qqq)[c(D_{\alpha})]\{q(\alpha)\}^{\ell(v,s)}
\end{align*}
for $0 < i < p^{n}$. Above we used the following notation $i = p^{u_{i}} g$, 
where $\gcd(p,g) = 1$ and $\alpha_{0}$, $\alpha_{1}$ are the unique elements of $\mathcal{B}_0(X)$ 
and $\mathcal{B}_{p^{n}}(X)$, respectively, and
\begin{align*}
q(\alpha) &= i + 3c(D_{\alpha}) + p^{n}, \\
k(s,v) &= \left\{
\begin{array}{ll}
1, & v \leq s, \\
p^{v-s}, & v > s, \\
\end{array}
\right. \\
\ell(s,v) &= \left\{
\begin{array}{ll}
\varphi(p^{n-s}), & v \leq s, \\
p^{n-v}, & v > s, \\
\end{array}
\right.
\end{align*}
\end{thm}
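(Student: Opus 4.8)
This is Theorem \ref{thm:spectral_sequence_equivariant_homology}, the construction of a spectral sequence converging to the $p^{n-s}$-equivariant Khovanov homology, obtained by filtering along a single orbit of crossings. Let me sketch how I would prove it.
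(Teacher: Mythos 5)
Your proposal contains no proof: after restating what the theorem says, it stops at ``let me sketch how I would prove it'' without supplying the sketch. Every substantive step is missing. Concretely, to prove this statement you would need (i) the construction of the filtration: writing $\cc(D;\qqq)$ as the total complex of a bicomplex whose $i$-th column is built from the Khovanov complexes $\cc(D_{\alpha};\qqq)$ for $\alpha \in \mathcal{B}_{i}(X)$, with the homological and quantum shifts $[c(D_{\alpha})]\{q(\alpha)\}$ accounted for; (ii) the equivariant structure of the columns: the group $\z{p^{n}}$ permutes the resolutions of the chosen orbit, so the $i$-th column decomposes over orbits of enhanced states as a sum of induced modules $\ind{\z{p^{n}}}{\z{p^{v}}}\cc(D_{\alpha};\qqq)$ where $\z{p^{v}} = \Iso_{D}(\alpha)$, and the constraint $p^{v} \mid \gcd(p^{n}, i)$ is what produces the bound $v \leq u_{i}$; (iii) the computation of $\Hom{}{\cyclicGroupAlg{p^{n}}}{\cyclotomicField{p^{n-s}}}{-}$ on these induced modules via Frobenius reciprocity and the decomposition of the restriction of $\cyclotomicField{p^{n-s}}$ to $\z{p^{v}}$ into copies of $\cyclotomicField{k(v,s)}$ with multiplicity $\ell(v,s)$ --- this is precisely where the functions $k(v,s)$ and $\ell(v,s)$ in the statement come from, and semi-simplicity of $\gpAlg{\z{p^{n}}}$ is needed so that this functor is exact and commutes with taking column homology; and (iv) the convergence argument, which here is immediate because the column filtration is bounded ($0 \leq i \leq p^{n}$), so the spectral sequence of the bicomplex converges to the homology of the total complex, i.e.\ to $\kh_{\z{p^{n}}}^{\ast,\ast,p^{n-s}}(D;\qqq)$. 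Without at least these four ingredients the statement is not proved; as written, your text is an announcement, not an argument.
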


\section{Equivariant Jones polynomials}
\label{sec: equiv jones}

Analogously as in the classical case, we can define the equivariant Jones polynomials of a periodic 
link.

\begin{defn}
Let $L$ be an $n$-periodic link. For $d \mid n$ define a $d$-th equivariant Khovanov polynomial by
$$\khp_{n,d}(L)(t,q) = \sum_{i,j}t^{i} q^{j} \dim_{\gpAlg{\xi_d}} \kh^{i,j,d}(L; \qqq)$$
and $d$-th equivariant Jones polynomial
$$\jones_{n,d}(L)(q) = \khp_{n,d}(L)(-1,q).$$
\end{defn}

\noindent The next theorem describes basic properties of equivariant Jones polynomials.

\begin{thm}\label{thm:properties_equivariant_polynomials}
Let $L$ be an $n$-periodic link and let $d \mid n$.
\begin{enumerate}
\item Equivariant Khovanov and Jones polynomials are invariants of periodic links i.e. they are 
invariant under Reidemeister moves.
\item If $\jones(L)$ denotes the ordinary unreduced Jones polynomial, then the following equality 
holds.
$$\jones(L) = \sum_{d \mid n} \phi(d) \jones_{n,d}(L),$$
where $\phi$ denotes the Euler's totient function.
\item If $d \mid n$ and for all $i,j$ we have $\dim_{\qqq}\kh^{i,j}(L; \qqq) < \varphi(d)$, then
\begin{align*}
\khp_{n,d}(L) &= 0, \\
\jones_{n,d}(L) &= 0.
\end{align*}
\end{enumerate}
\end{thm}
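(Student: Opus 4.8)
The plan is to derive all three statements of Theorem~\ref{thm:properties_equivariant_polynomials} directly from the structural results on equivariant Khovanov homology already assembled in Section~\ref{sec: equiv kh}, so that essentially no new geometric or homological input is required. For part~(1), invariance, I would invoke Theorem~\ref{thm:invariance_equivariant_homology}: the groups $\kh_{\z{n}}^{i,j,d}(L;\qqq)$ are equivariant link invariants, hence so are their $\cyclotomicField{d}$-dimensions, and therefore the generating functions $\khp_{n,d}(L)(t,q)$ and $\jones_{n,d}(L)(q) = \khp_{n,d}(L)(-1,q)$ depend only on the equivariant isotopy class. One small point to address is that ``invariant under Reidemeister moves'' should be read as ``equivariant Reidemeister moves,'' matching the hypothesis that $L$ carries a $\z{n}$-action; I would phrase this carefully.

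For part~(2), I would take the decomposition of $\cyclicGroupAlg{n}$-modules from Proposition~\ref{prop: structure rational equiv kh}(2),
$$\kh^{i,j}(D;\qqq) = \bigoplus_{d \mid n} \kh_{\z{n}}^{i,j,d}(D;\qqq),$$
and compute $\qqq$-dimensions on both sides. Since each $\kh_{\z{n}}^{i,j,d}(D;\qqq)$ is a vector space over $\cyclotomicField{d}$ and $\dim_{\qqq}\cyclotomicField{d} = \varphi(d)$, we get $\dim_{\qqq}\kh^{i,j}(D;\qqq) = \sum_{d\mid n}\varphi(d)\dim_{\cyclotomicField{d}}\kh_{\z{n}}^{i,j,d}(D;\qqq)$. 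Multiplying by $t^i q^j$, summing over $i,j$, and specializing $t = -1$ turns the left side into $\jones(L)(q)$ and the right side into $\sum_{d\mid n}\varphi(d)\jones_{n,d}(L)(q)$, which is exactly the claimed identity (with $\phi = \varphi$). I would note that the grading shifts $n_{\pm}(D)$ entering the passage from the complex to the Jones polynomial are the same on both sides, so they cause no trouble.

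For part~(3), the vanishing criterion, I would simply quote Proposition~\ref{prop: structure rational equiv kh}(3): the hypothesis $\dim_{\qqq}\kh^{i,j}(L;\qqq) < \varphi(d)$ for all $i,j$ forces $\kh_{\z{n}}^{\ast,\ast,d}(L;\qqq) = 0$, whence $\khp_{n,d}(L) = 0$ and, setting $t=-1$, $\jones_{n,d}(L) = 0$ as well. The reasoning there is that a nonzero $\cyclotomicField{d}$-vector space in bidegree $(i,j)$ would contribute at least $\varphi(d)$ to $\dim_{\qqq}\kh^{i,j}(L;\qqq)$, contradicting the bound.

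There is no serious obstacle here: the theorem is a formal consequence of the module-theoretic decomposition and the invariance statement, both of which are imported from \cite{Politarczyk2} and recalled above. The only thing requiring a little care is bookkeeping --- making sure the normalizing shifts and the specialization $t \mapsto -1$ are handled consistently across the Wedderburn decomposition --- but these are routine. If one wanted to be fully self-contained, the place to expand would be the remark that $e_d \kh^{\ast,\ast}(D;\qqq) \cong \kh_{\z{n}}^{\ast,\ast,d}(D;\qqq)$ is a $\cyclotomicField{d}$-module of the asserted dimension, which is immediate from Proposition~\ref{prop: structure rational equiv kh}(1).
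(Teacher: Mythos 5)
Your proposal is correct and follows essentially the same route as the paper, which derives part (1) from Theorem \ref{thm:invariance_equivariant_homology} and parts (2) and (3) from Proposition \ref{prop: structure rational equiv kh}; your extra bookkeeping with $\dim_{\qqq}\cyclotomicField{d} = \varphi(d)$ just makes explicit what the paper leaves implicit.
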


\begin{proof}
The first part of the theorem follows from theorem \ref{thm:invariance_equivariant_homology}. The 
remaining two are consequences of Proposition \ref{prop: structure rational equiv kh}.
\end{proof}

\noindent From now on we assume that all links are $p^{n}$-periodic, for $p$ an odd prime and $n > 
0$.

\begin{defn}\label{defn:difference Jones polynomials}
Suppose that $D$ is a $p^n$-periodic link diagram. Define the difference Jones 
polynomials
$$\diffJones_{n,s}(D) = \jones_{p^{n},p^s}(D) - \jones_{p^{n},p^{s+1}}(D)$$
for $0 \leq s \leq n$.
\end{defn}

\begin{cor}\label{cor:Jones_decomposition}
The following equality holds.
$$\jones(D) = \sum_{s = 0}^{n} p^{s}\diffJones_{n,s}(D)$$ 
\end{cor}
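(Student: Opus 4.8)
The plan is to derive Corollary \ref{cor:Jones_decomposition} directly from the decomposition of the ordinary Jones polynomial in terms of the equivariant Jones polynomials (part (2) of Theorem \ref{thm:properties_equivariant_polynomials}) specialized to the case $n$ replaced by $p^n$, combined with a telescoping/Abel-summation argument on the difference polynomials. First I would write down, for the $p^n$-periodic diagram $D$, the identity
$$\jones(D) = \sum_{d \mid p^n} \varphi(d)\, \jones_{p^n, d}(D) = \sum_{s=0}^{n} \varphi(p^s)\, \jones_{p^n, p^s}(D),$$
using that the divisors of $p^n$ are exactly $1 = p^0, p, \ldots, p^n$ and that $\varphi(p^0) = 1$. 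Next, recall $\varphi(p^0) = 1$ and $\varphi(p^s) = p^s - p^{s-1}$ for $s \geq 1$.

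The core computation is then a summation by parts. Writing $J_s := \jones_{p^n,p^s}(D)$ for brevity (I will not introduce this macro in the final text, just expand it), we have $\diffJones_{n,s}(D) = J_s - J_{s+1}$ for $0 \le s \le n-1$, and one checks from part (3) of Theorem \ref{thm:properties_equivariant_polynomials} or simply from the definition that the telescoping is clean. Indeed,
$$\sum_{s=0}^{n} p^s \diffJones_{n,s}(D) = \sum_{s=0}^{n} p^s (J_s - J_{s+1}) = \sum_{s=0}^{n} p^s J_s - \sum_{s=0}^{n} p^s J_{s+1}.$$
Re-indexing the second sum by $s \mapsto s-1$ gives $\sum_{s=1}^{n+1} p^{s-1} J_s$, so the whole expression equals $p^0 J_0 + \sum_{s=1}^{n}(p^s - p^{s-1}) J_s - p^n J_{n+1} = J_0 + \sum_{s=1}^{n} \varphi(p^s) J_s - p^n J_{n+1}$. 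Comparing with the displayed formula for $\jones(D)$ above, it remains only to account for the boundary term $p^n J_{n+1} = p^n \jones_{p^n, p^{n+1}}(D)$.

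The one genuine point to address — and the only place where anything must be verified rather than just manipulated — is that $\jones_{p^n, p^{n+1}}(D) = 0$, i.e. the boundary term vanishes. This holds because the third grading of the equivariant Khovanov homology is, by its very definition in Section \ref{sec: equiv kh}, supported only for divisors $d$ of $p^n$; since $p^{n+1} \nmid p^n$, the group $\kh_{\z{p^n}}^{\ast,\ast,p^{n+1}}(D;\qqq)$ is zero, hence so is $\jones_{p^n,p^{n+1}}(D)$. (Equivalently, $\diffJones_{n,n}(D) = \jones_{p^n,p^n}(D) - \jones_{p^n,p^{n+1}}(D) = \jones_{p^n,p^n}(D)$, which is the convention implicitly forcing $J_{n+1}=0$.) With that observation the telescoped sum collapses to exactly $\sum_{s=0}^n \varphi(p^s) J_s = \jones(D)$, completing the proof. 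I expect no real obstacle here; the result is a formal consequence of Theorem \ref{thm:properties_equivariant_polynomials}(2) and the support convention, and the only care needed is bookkeeping the index shift and the vanishing of the out-of-range term.
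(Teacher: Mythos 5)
Your proof is correct and follows essentially the same route as the paper, which deduces the identity from Theorem \ref{thm:properties_equivariant_polynomials}(2) together with $\varphi(p^s)=p^s-p^{s-1}$; you have merely written out the telescoping and the vanishing of the boundary term $\jones_{p^n,p^{n+1}}(D)$ explicitly, which the paper leaves implicit.
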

\begin{proof}
Proof follows from theorem \ref{thm:properties_equivariant_polynomials} and a simple fact that
$$\phi(p^s) = p^s - p^{s-1}.$$
\end{proof}

\begin{ex}\label{ex: diffJones trivial links}
Let $T_{k\cdot p^{n} + f}$ be as in Proposition \ref{prop: equiv kh trivial links}. Proposition 
\ref{prop: equiv kh trivial links} implies that the equivariant and 
difference Jones polynomials of $T_{k\cdot p^{n} + f}$ can be expressed in terms 
of polynomials $\mathcal{P}_{s}$ from Definition \ref{defn:diff polynomials trivial links}.
\begin{align*}
\jones_{p^{n}, p^{n-u}}(T_{k \cdot p^{n} + f}) &= \sum_{s = n-u}^{n} \qdim M^{k,f}_{s}, \\
\diffJones_{n,n-u}(T_{k \cdot p^{n} + f}) &= \qdim M^{k,f}_{n-u}.
\end{align*}
\end{ex}

One of the most important properties of the Jones polynomials is the skein relation
$$q^{-2} \jones\left( \raisebox{-5pt}{} \right) - q^{2} \jones \left( \raisebox{-5pt}{} \right) = \left( q^{-1} - q
\right) \jones\left( \raisebox{-5pt}{} \right).$$
The skein relation is a consequence of the long exact sequence of Khovanov homology. Spectral 
sequences from Theorem \ref{thm:spectral_sequence_equivariant_homology} can be utilized to obtain 
the following analogue of the skein relation for the difference Jones polynomials.

\begin{thm}\label{thm:skein_relations_equiv_jones_pol}
The difference Jones polynomials have the following properties
\begin{enumerate}
\item $\diffJones_{0}$ satisfies the following version of the skein relation
\begin{align*}
&q^{-2p^n} \diffJones_{n,0}\left(\posOrbit\right) - q^{2p^n} 
\diffJones_{n, 0}\left(\negOrbit\right) = \\
&= \left(q^{-p^n} - q^{p^n}\right) \diffJones_{n,0}\left(\orientResOrbit\right),
\end{align*}
where $\posOrbit$, $\negOrbit$ and $\orientResOrbit$ denote the orbit of positive, negative and 
orientation preserving resolutions of crossing, respectively.
\item for any $0 \leq s \leq n$, $\diffJones_{s}$ satisfies the following congruences
\begin{align*}
&q^{-2p^{n}}\diffJones_{n,n-s}\left(\posOrbit\right) - 
q^{2p^{n}}\diffJones_{n,n-s}\left(\negOrbit\right) \equiv \\
&\equiv \left(q^{-p^n} - q^{p^n}\right) \diffJones_{n,n-s}\left(\orientResOrbit\right) 
\pmod{q^{p^{s}} - q^{-p^{s}}}.
\end{align*}
\end{enumerate}
\end{thm}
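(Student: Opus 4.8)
The plan is to extract both statements from the spectral sequence of Theorem \ref{thm:spectral_sequence_equivariant_homology} applied to the single orbit $X$ of $p^n$ crossings being changed. Taking graded Euler characteristics of a spectral sequence gives the alternating sum of the graded dimensions of any page; since all differentials vanish on Euler characteristics, the Euler characteristic of the $E_1$-page equals the Euler characteristic of the abutment. So the first step is to write down $\sum_i (-1)^i \qdim {}_{p^{n-s}}E_1^{i,\ast}$, interpret each column via the formulas for $E_1^{0,j}$, $E_1^{p^n,j}$ and $E_1^{i,j}$ in terms of $\kh_{\z{p^v}}^{\ast,\ast,k(v,s)}$ of the partially-resolved diagrams $D_\alpha$, and recognize the $i=0$ and $i=p^n$ columns as (shifted) copies of the equivariant Jones polynomials of $\posOrbit$ respectively $\orientResOrbit$ and $\negOrbit$ — the two extreme resolutions of an orbit of $p^n$ crossings are exactly the oriented resolution and the ``other'' resolution, matching the classical unoriented skein triangle. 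The grading shifts $[c(D_\alpha)]$ and $\{q(\alpha)\}$ produce precisely the monomial factors $q^{-2p^n}$, $q^{2p^n}$, $q^{\pm p^n}$ in the statement (after converting $n_\pm$ bookkeeping between $\posOrbit$, $\negOrbit$ and their resolutions, exactly as in the classical derivation of the skein relation from the long exact sequence).

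For part (1), where $s=0$, the intermediate columns $0<i<p^n$ contribute terms $\kh_{\z{p^v}}^{\ast,\ast,k(v,0)}$ with multiplicity $\ell(v,0)$; here one should observe that $\diffJones_{v,v}$ is essentially the ``top'' difference piece and that the combination one is computing, namely $\diffJones_{n,0} = \jones_{p^n,1} - \jones_{p^n,p}$, isolates exactly the $d=1$ (trivial-isotypic) part of the homology. The key point is that taking $\jones_{p^n,1}-\jones_{p^n,p}$ of the whole spectral sequence kills all the intermediate-column contributions: each $\alpha$ with $0<i<p^n$ lies in an orbit of size $p^{u}$ for some $u\ge 1$, so its contribution to $\kh^{\ast,\ast,1}$ and to $\kh^{\ast,\ast,p}$ cancels in the difference (this is the content of Proposition \ref{prop: structure rational equiv kh}(3) applied fiberwise, or a direct parity/orbit-counting argument). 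Hence only the two extreme columns survive, giving the exact skein relation. I would phrase this cancellation carefully, since it is where the ``difference'' structure does the real work.

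For part (2), with general $s$, the same Euler-characteristic computation holds, but now the intermediate columns need not cancel; instead one argues they are divisible by $q^{p^s}-q^{-p^s}$. The mechanism is that for each intermediate $\alpha$ the relevant homology is a vector space over a cyclotomic field $\cyclotomicField{k(v,s)}$ with $k(v,s) = p^{v-s}$ when $v>s$ and $k(v,s)=1$ when $v\le s$; the contributions with $v\le s$ get multiplicity $\varphi(p^{n-s})$ and, crucially, combine into polynomials that one recognizes — via Example \ref{ex: diffJones trivial links} and Definition \ref{defn:diff polynomials trivial links}, or more directly by the structure of $\diffJones_{v,v-s}$ — as multiples of $q^{p^s}-q^{-p^s}$. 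So the plan is: reduce the Euler-characteristic identity modulo $q^{p^s}-q^{-p^s}$ and show every intermediate-column term dies modulo this element, leaving the extreme-column skein triangle. The cleanest route is probably to prove a small lemma: for $v>s$, $\diffJones_{v,v-s}(s') \equiv 0 \pmod{q^{p^s}-q^{-p^s}}$, and the $v\le s$ terms are controlled by a $\varphi(p^{n-s})$-fold multiplicity together with the identity $\sum \phi(p^j) = p^s$ that makes the relevant cyclotomic contributions collapse.

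The main obstacle I anticipate is bookkeeping, not conceptual: correctly tracking the grading shifts $c(D_\alpha)$ and $q(\alpha)$ through the orbit of $p^n$ crossings so that the extreme-column contributions assemble into exactly $q^{\mp 2p^n}\diffJones_{n,0}$ of the positive/negative orbit (including the sign $(-1)^{n_-}$ accounting between a diagram and its resolution), and, for part (2), pinning down precisely why each intermediate term is divisible by $q^{p^s}-q^{-p^s}$ rather than by some smaller or larger cyclotomic factor. Checking this last divisibility claim against the explicit polynomials $\mathcal{P}_j$ of Definition \ref{defn:diff polynomials trivial links} — which vanish appropriately modulo $q^{p^s}-q^{-p^s}$ — will likely be the crux, and I would isolate it as a standalone lemma about the polynomials $\mathcal{P}_j(q^{p^{n-j}})$ before assembling the final argument.
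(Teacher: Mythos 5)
Your part (1) is essentially the paper's argument: take Poincar\'e polynomials of the spectral sequences of Theorem \ref{thm:spectral_sequence_equivariant_homology} at $t=-1$, observe that in the difference of the $d=1$ and $d=p$ spectral sequences every column $0<i<p^n$ drops out (only the two extreme resolutions have full isotropy $\z{p^n}$; in the paper this is Corollary \ref{cor:difference_equivariant_jones_polynomials} with $s=n$, where the intermediate sum is empty), and then compare the resulting identities for the positive-orbit diagram $D$ and the crossing-switched diagram $D^{!}$ via Proposition \ref{prop:changing_orbit} to eliminate the unoriented resolution. One small caveat: the cancellation of intermediate columns is not ``Proposition \ref{prop: structure rational equiv kh}(3) applied fiberwise'' (no dimension bound is involved); it is simply that the intermediate columns of the two spectral sequences coincide after normalizing dimensions over the respective cyclotomic fields, i.e.\ your alternative ``orbit-counting'' justification is the correct one.

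Part (2), however, has a genuine gap. Your plan is to reduce the Euler-characteristic identity for the single diagram $D$ modulo $q^{p^s}-q^{-p^s}$ and show that \emph{every} intermediate-column term dies, via the proposed lemma that $\diffJones_{v,v-s}(s')\equiv 0 \pmod{q^{p^s}-q^{-p^s}}$ for $v>s$. This is false, and moreover the surviving intermediate terms (those in columns $i$ with $p^s\mid i$) include $v=s$, where the contribution is $\diffJones_{s,0}(D_\alpha)$, an ordinary difference Jones polynomial with no divisibility whatsoever (for $s=0$ it is the ordinary Jones polynomial of a resolution, which equals a power of $2$ at $q=1$ and hence is not divisible by $q-q^{-1}$). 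Even for $v>s$ the lemma fails: take $s=0$, $v=1$ and let $s'$ be a single free orbit of $p$ circles; by Example \ref{ex: diffJones trivial links}, $\diffJones_{1,1}(s')=\mathcal{P}_{1}(q)$, and $\mathcal{P}_{1}(1)=\frac{1}{p}\sum_{k=1}^{p-1}\binom{p}{k}=(2^{p}-2)/p\neq 0$, so $\mathcal{P}_{1}(q)$ is not divisible by $q-q^{-1}$. The divisibility asserted in the theorem is not a column-by-column phenomenon for one diagram: it only appears after pairing the formula for $D$ (positive orbit) with the formula for $D^{!}$ (negative orbit), with the prefactors $q^{-2p^{n}}$ and $-q^{2p^{n}}$ of the skein combination. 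The involution $\kappa$ matches the state $D_{\alpha}$ in column $i$ with $D^{!}_{\kappa(\alpha)}$ in column $p^{n}-i$, and Proposition \ref{prop:changing_orbit} shows each matched pair contributes a common factor $q^{i-p^{n}}-q^{p^{n}-i}$; since only columns with $p^{s}\mid i$ survive in Corollary \ref{cor:difference_equivariant_jones_polynomials} and $p^{s}\mid p^{n}$, this factor is divisible by $q^{p^{s}}-q^{-p^{s}}$. So you must run the $D$ versus $D^{!}$ comparison (which you already invoke for the extreme columns in part (1)) through the intermediate columns as well; without it, part (2) does not go through, and the remark about $\varphi(p^{n-s})$-fold multiplicities and $\sum_j\phi(p^{j})=p^{s}$ does not repair it, since the $v<s$ contributions already cancel identically between the two spectral sequences and the $v\geq s$ ones are exactly the problematic terms.
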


\begin{rem}
We defer the proof of Theorem \ref{thm:skein_relations_equiv_jones_pol} to Section 
\ref{sec: proofs}.
\end{rem}

The above theorem has a number of consequences regarding the Jones polynomial of a periodic link. 
For example, it enables us to write down a few criteria for the periodicity of a link in terms of 
its Jones polynomial. One such example was given by J.H. Przytycki in \cite{Przytycki}.

\begin{thm}\label{thm: przytycki thm}
Suppose that $L$ is a $p^n$-periodic link. Then the following congruence holds
$$\jones(L)(q) \equiv \jones(L)(q^{-1}) \pmod{\mathcal{I}_{p^n}},$$
where $\mathcal{I}_{p^n}$ is an ideal generated by the following monomials 
$$p^n, p^{n-1} \left(q^{p} - q^{-p} \right), \ldots, p\left(q^{p^{n-1}} - q^{p^{-n-1}}\right), 
q^{p^{n}} - q^{-p^{n}}.$$
\end{thm}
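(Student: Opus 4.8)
The plan is to derive Przytycki's congruence as a formal consequence of the decomposition in Corollary \ref{cor:Jones_decomposition} together with the symmetry property of the equivariant Jones polynomials, so no new topology is needed. First I would recall that $\jones(L)(q^{-1})$ is, up to a unit, obtained from $\jones(L)(q)$ by mirror-image symmetry; more precisely, since we are using the unreduced Jones polynomial and $L$ is a genuine (oriented) link, the quantity $\jones(L)(q) - \jones(L)(q^{-1})$ measures the failure of the Khovanov homology to be symmetric under $q \mapsto q^{-1}$. The key input is that the equivariant pieces $\jones_{p^n,p^s}(L)$ are themselves constrained: I would argue, using the $\z{p^n}$-action on $\cc(D;\qqq)$ from Proposition \ref{prop: periodic cochain complex} and the interaction of the Khovanov duality isomorphism with this action, that each $\diffJones_{n,s}(L)$ satisfies $\diffJones_{n,s}(L)(q) \equiv \diffJones_{n,s}(L)(q^{-1})$ modulo an ideal whose generator reflects the order of the relevant cyclotomic piece, namely $q^{p^{n-s}} - q^{-p^{n-s}}$ (this is exactly the kind of statement the second part of Theorem \ref{thm:skein_relations_equiv_jones_pol} packages for the difference polynomials).

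Granting that, the argument is bookkeeping. By Corollary \ref{cor:Jones_decomposition} we have $\jones(L)(q) - \jones(L)(q^{-1}) = \sum_{s=0}^{n} p^{s} \bigl( \diffJones_{n,s}(L)(q) - \diffJones_{n,s}(L)(q^{-1}) \bigr)$. For each fixed $s$, the $s$-th summand is $p^s$ times something lying in the ideal generated by $q^{p^{n-s}} - q^{-p^{n-s}}$, hence lies in the ideal generated by $p^s \bigl( q^{p^{n-s}} - q^{-p^{n-s}} \bigr)$. Summing over $s = 0, \ldots, n$ shows the total lies in the ideal generated by the collection $\{\, p^s(q^{p^{n-s}} - q^{-p^{n-s}}) : 0 \le s \le n \,\}$, and for $s = n$ the generator $p^n(q - q^{-1})$ can be replaced by $p^n$ itself since $q - q^{-1}$ is a non-unit one absorbs — more carefully, the $s=n$ term contributes $p^n \bigl(\diffJones_{n,n}(L)(q) - \diffJones_{n,n}(L)(q^{-1})\bigr)$ which is divisible by $p^n$ outright, giving the generator $p^n$. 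This is precisely the ideal $\mathcal{I}_{p^n}$ in the statement, with generators $p^n,\ p^{n-1}(q^p - q^{-p}),\ \ldots,\ q^{p^n} - q^{-p^n}$.

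The main obstacle, and the step I would spend the most care on, is establishing that each difference polynomial $\diffJones_{n,s}(L)$ is symmetric under $q \mapsto q^{-1}$ modulo $q^{p^{n-s}} - q^{-p^{n-s}}$. The natural route is: the Khovanov homology duality $\kh^{i,j}(L;\qqq) \cong \kh^{i,j}(L;\qqq)^{\vee}$ intertwining $j \mapsto -j$ is $\z{p^n}$-equivariant (one checks this on the chain level, since the duality is induced by a pairing respecting the group action), but under dualization the $\cyclotomicField{d}$-module structure is twisted by complex conjugation $\xi_d \mapsto \xi_d^{-1}$, which corresponds to the automorphism sending the equivariant polynomial to its $q \mapsto q^{-1}$ counterpart only after accounting for a Galois twist. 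The cleanest statement to aim for is that $\jones_{p^n, d}(L)(q^{-1})$ equals the equivariant Jones polynomial associated to the conjugate idempotent, and that the \emph{sum} $\sum_{d' \mid d,\, d' \ge \text{appropriate}} (\text{stuff})$ entering $\diffJones_{n,s}$ is conjugation-stable up to the claimed ideal. Alternatively — and this is likely the route the author takes — one invokes the already-stated Theorem \ref{thm:skein_relations_equiv_jones_pol}(2) directly: apply it iteratively along a sequence of crossing changes and resolutions unknotting $L$, tracking that every correction term stays in the ideal, exactly mirroring the classical proof that the Jones polynomial is determined by the skein relation, and then the congruence version propagates modulo $q^{p^s} - q^{-p^s}$ at each stage. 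I would present the proof via this second, skein-theoretic route since it reuses Theorem \ref{thm:skein_relations_equiv_jones_pol} cleanly, and relegate the duality discussion to a remark.
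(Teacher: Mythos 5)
Your proposal correctly identifies the two inputs the paper uses --- the decomposition $\jones(L)=\sum_{s}p^{s}\diffJones_{n,s}(L)$ of Corollary \ref{cor:Jones_decomposition} and the congruence skein relation of Theorem \ref{thm:skein_relations_equiv_jones_pol}(2), with the weights $p^{s}$ matched against the generators of $\mathcal{I}_{p^{n}}$ --- but the step that carries all the content is not established in either of your two routes. Your primary route hinges on the claim that each $\diffJones_{n,s}(L)$ satisfies $\diffJones_{n,s}(L)(q)\equiv\diffJones_{n,s}(L)(q^{-1})$ modulo $q^{p^{n-s}}-q^{-p^{n-s}}$, to be deduced from a self-duality $\kh^{i,j}(L;\qqq)\cong\kh^{i,j}(L;\qqq)^{\vee}$ intertwining $j\mapsto -j$. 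No such self-duality exists: a duality of that shape would force $\jones(L)(q)=\jones(L)(q^{-1})$ on the nose, which already fails for the trefoil. The actual duality of Khovanov homology relates $L$ to its mirror $L^{!}$, so even after the Galois-twist discussion it can only reduce your claim to a comparison of $L$ with $L^{!}$ --- which is precisely the comparison that requires topological input, and in this graded-piece-by-graded-piece form it would additionally require an equivariant refinement of duality that is nowhere available in the paper. Thus the congruence you feed into your bookkeeping is unproved, and the mechanism you propose for it cannot produce it.

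Your fallback skein route is the right idea but is mis-aimed where it matters: ``unknotting $L$ by crossing changes and resolutions'' is not the relevant reduction for a statement about $\jones(L)(q)-\jones(L)(q^{-1})$. The paper instead passes from $L$ to its mirror $L^{!}$ by switching one \emph{orbit} of crossings at a time (switching all crossings is an orbit-by-orbit operation, so periodicity is preserved at every stage), and shows that a single orbit switch does not change $\jones$ modulo $\mathcal{I}_{p^{n}}$: writing $\jones=\sum_{s=0}^{n}p^{n-s}\diffJones_{n,n-s}$ and using that $q^{\pm 2p^{n}}\equiv 1$ modulo the generator $q^{p^{n}}-q^{-p^{n}}$, Theorem \ref{thm:skein_relations_equiv_jones_pol}(2) makes the $s$-th summand of the difference congruent, modulo the generator $p^{n-s}\bigl(q^{p^{s}}-q^{-p^{s}}\bigr)$ of $\mathcal{I}_{p^{n}}$, to $p^{n-s}\bigl(q^{-p^{n}}-q^{p^{n}}\bigr)$ times a difference polynomial of the resolved orbit, which again lies in $\mathcal{I}_{p^{n}}$. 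Hence $\jones(L)\equiv\jones(L^{!})\pmod{\mathcal{I}_{p^{n}}}$, and the classical identity $\jones(L^{!})(q)=\jones(L)(q^{-1})$ finishes the proof. Your write-up alludes to mirror symmetry only in passing and never states this orbit-switching reduction or invokes the identity $\jones(L^{!})(q)=\jones(L)(q^{-1})$ in the argument itself, so as it stands neither route closes; repairing the second route along the lines above yields exactly the paper's proof.
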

\begin{proof}
Notice that
\begin{align*}
&\jones\left(\posOrbit\right) - \jones\left(\negOrbit\right) \equiv \\
&q^{2p^{n}}\jones\left(\posOrbit\right) - q^{-2p^{n}}\jones\left(\negOrbit\right) \equiv \\
& \sum_{s=0}^{n} p^{n-s}\left(q^{2p^{n}}\diffJones_{n,n-s}\left(\posOrbit\right) - 
q^{-2p^{n}}\diffJones_{n,n-s}\left(\negOrbit\right)\right) \equiv \\
&\equiv 0 \pmod{\mathcal{I}_{p^{n}}}.
\end{align*}
Hence, switching crossings from a single orbit does not change the Jones polynomial modulo 
$\mathcal{I}_{p^{n}}$. Since we can pass from $L$ to its mirror image $L^{!}$ by switching one 
orbit at at time, it follows that
$$\jones(L) \equiv \jones(L^{!}) \pmod{\mathcal{I}_{p^{n}}}.$$
Taking into account the relation between the Jones polynomials of $L$ and $L^{!}$
$$\jones(L^{!})(q) = \jones(L)(q^{-1})$$
concludes the proof
\end{proof}

The above theorem can be considerably strengthened with the aid of Theorem 
\ref{thm:properties_equivariant_polynomials}.

\begin{thm}\label{thm: strength Przytycki thm}
Suppose that $L$ is a $p^n$-periodic link and for all $i,j$ we have $\dim_{\qqq} \kh^{i,j}(L;\qqq) 
< \varphi(p^{s})$, then the following congruence holds
$$\jones(L)(q) \equiv \jones(L)(q^{-1}) \pmod{\mathcal{I}_{p^{n},s}},$$
where $\mathcal{I}_{p^{n},s}$ is the ideal generated by the following monomials
$$q^{p^{n}} - q^{-p^{n}}, p\left(q^{p^{n-1}} - q^{-p^{n-1}}\right), \ldots, p^{s-1} 
\left(q^{p^{n-s+1}} - q^{-p^{n-s+1}}\right).$$
\end{thm}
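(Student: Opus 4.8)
The plan is to follow the template of the proof of Theorem~\ref{thm: przytycki thm}, but to exploit the vanishing hypothesis $\dim_{\qqq}\kh^{i,j}(L;\qqq)<\varphi(p^{s})$ to kill the low-index generators of $\mathcal{I}_{p^{n}}$. First I would apply Proposition~\ref{prop: structure rational equiv kh}(3): the hypothesis forces $\kh_{\z{p^{n}}}^{\ast,\ast,p^{t}}(L;\qqq)=0$ whenever $\varphi(p^{t})>\dim_{\qqq}\kh^{i,j}(L;\qqq)$ for all $i,j$; since $\varphi(p^{t})=p^{t}-p^{t-1}$ is increasing in $t$, the bound $\dim<\varphi(p^{s})$ gives $\kh_{\z{p^{n}}}^{\ast,\ast,p^{t}}(L;\qqq)=0$ for all $t\ge s$. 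Translating this to Jones polynomials via the definition of $\jones_{p^{n},p^{t}}$ and of the difference polynomials, I get $\jones_{p^{n},p^{t}}(L)=0$ for $t\ge s$, and hence $\diffJones_{n,t}(L)=0$ for $t\ge s$, and in fact $\diffJones_{n,s-1}(L)=\jones_{p^{n},p^{s-1}}(L)$ as well. This is a property of the \emph{diagram-independent} invariant, so it holds simultaneously for $L$ and any diagram of it.

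Next I would rerun the crossing-change computation from Theorem~\ref{thm: przytycki thm}. Using Corollary~\ref{cor:Jones_decomposition}, $\jones(D)=\sum_{s'=0}^{n}p^{s'}\diffJones_{n,s'}(D)$, so when I change one orbit of crossings the difference $\jones(\posOrbit)-\jones(\negOrbit)$ (up to the harmless $q^{\pm 2p^{n}}$ factors, as in the original proof) splits as $\sum_{s'=0}^{n}p^{n-s'}\bigl(q^{2p^{n}}\diffJones_{n,n-s'}(\posOrbit)-q^{-2p^{n}}\diffJones_{n,n-s'}(\negOrbit)\bigr)$. The crucial point is that only the terms with $n-s'\le s-1$, i.e.\ $s'\ge n-s+1$, survive, because the equivariant Khovanov homologies $\kh_{\z{p^{n}}}^{\ast,\ast,p^{n-s'}}$ with $n-s'\ge s$ vanish for the diagrams involved. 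Here I must be slightly careful: the vanishing above was deduced for $L$, but the crossing-change diagrams $\posOrbit$ and $\negOrbit$ represent different links, so I would instead observe that for each \emph{fixed} diagram in the cobordism the relevant homology is a summand of the homology of its underlying link, and the skein/congruence part of Theorem~\ref{thm:skein_relations_equiv_jones_pol}(2) with modulus $q^{p^{s'}}-q^{-p^{s'}}$ controls exactly the surviving terms; combined with the $p$-adic factors $p^{n-s'}$ this produces the generators $p^{s-1-j}\bigl(q^{p^{n-s+1+j}}-q^{-p^{n-s+1+j}}\bigr)$ of $\mathcal{I}_{p^{n},s}$ for $0\le j\le s-1$ (equivalently $q^{p^{n}}-q^{-p^{n}}$ down to $p^{s-1}(q^{p^{n-s+1}}-q^{-p^{n-s+1}})$), and no others. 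Thus one orbit switch changes $\jones$ by an element of $\mathcal{I}_{p^{n},s}$.

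Finally I would close the argument exactly as before: $L$ and its mirror $L^{!}$ differ by switching the orbits one at a time, each switch staying within $\mathcal{I}_{p^{n},s}$, so $\jones(L)\equiv\jones(L^{!})\pmod{\mathcal{I}_{p^{n},s}}$, and $\jones(L^{!})(q)=\jones(L)(q^{-1})$ gives the claim. I expect the main obstacle to be bookkeeping in the second paragraph: matching the $p$-power coefficients coming from $\phi(p^{s'})$ in Corollary~\ref{cor:Jones_decomposition} against the moduli $q^{p^{s'}}-q^{-p^{s'}}$ from Theorem~\ref{thm:skein_relations_equiv_jones_pol}(2), and checking that the vanishing of $\diffJones_{n,t}$ for $t\ge s$ really does let one drop precisely the generators $p^{n}, p^{n-1}(q^{p}-q^{-p}),\dots$ that appear in $\mathcal{I}_{p^{n}}$ but not in $\mathcal{I}_{p^{n},s}$, so that what remains is exactly the stated ideal.
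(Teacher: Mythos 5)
Your overall strategy coincides with the paper's: use Proposition \ref{prop: structure rational equiv kh}(3) (via Theorem \ref{thm:properties_equivariant_polynomials}) to kill $\jones_{p^{n},p^{t}}(L)$ and hence $\diffJones_{n,t}(L)$ for $t\geq s$, truncate the decomposition of Corollary \ref{cor:Jones_decomposition}, and rerun the orbit-switching argument of Theorem \ref{thm: przytycki thm} using the congruences of Theorem \ref{thm:skein_relations_equiv_jones_pol}(2). However, there is a genuine gap exactly at the point you flagged, and your proposed repair does not close it. The claim that ``one orbit switch changes $\jones$ by an element of $\mathcal{I}_{p^{n},s}$'' is not justified: the intermediate links in the chain of switches from $L$ to $L^{!}$ need not satisfy the dimension hypothesis, so for them the terms $p^{i}\diffJones_{n,i}$ with $i\geq s$ need not vanish. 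For such a term, Theorem \ref{thm:skein_relations_equiv_jones_pol}(2) only controls the change under an orbit switch modulo $p^{i}\left(q^{p^{n-i}}-q^{-p^{n-i}}\right)$ together with multiples of $q^{p^{n}}-q^{-p^{n}}$, and for $i\geq s$ the element $p^{i}\left(q^{p^{n-i}}-q^{-p^{n-i}}\right)$ does not lie in $\mathcal{I}_{p^{n},s}$: already for $n=s=1$ one has $p\left(q-q^{-1}\right)\notin\left(q^{p}-q^{-p}\right)$, as evaluation at a primitive $2p$-th root of unity shows. Your remark that the equivariant homology of an intermediate diagram is a summand of the homology of its underlying link is true but does not help, since nothing bounds the Khovanov dimensions of that underlying link.

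The repair, which is what the paper's terse ``argue as in the previous Theorem'' has to mean, is to use the hypothesis only at the two endpoints and to track each surviving difference polynomial separately. Since $\kh^{i,j}(L^{!};\qqq)\cong\kh^{-i,-j}(L;\qqq)$, the mirror satisfies the same bound, so $\jones(L)=\sum_{i=0}^{s-1}p^{i}\diffJones_{n,i}(L)$ and likewise for $L^{!}$. For each fixed $i\leq s-1$ the congruence of Theorem \ref{thm:skein_relations_equiv_jones_pol}(2) holds for every periodic link, with no hypothesis, so along the chain of orbit switches $\diffJones_{n,i}$ changes only by elements of the ideal $\left(q^{p^{n-i}}-q^{-p^{n-i}},\,q^{p^{n}}-q^{-p^{n}}\right)$; multiplying by $p^{i}$ lands in $\mathcal{I}_{p^{n},s}$ precisely because $i\leq s-1$, so that $p^{i}\left(q^{p^{n-i}}-q^{-p^{n-i}}\right)$ is one of the listed generators and $p^{i}\left(q^{p^{n}}-q^{-p^{n}}\right)$ is a multiple of the first. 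Summing over $i\leq s-1$ gives $\jones(L)\equiv\jones(L^{!})\pmod{\mathcal{I}_{p^{n},s}}$, and the conclusion follows from $\jones(L^{!})(q)=\jones(L)(q^{-1})$ as you say.
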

\begin{proof}
First notice that Theorem \ref{thm:properties_equivariant_polynomials} implies that for $s' \geq s$ 
the equivariant Jones polynomials $\jones_{n,p^{s'}}(L)$ vanish. Therefore $\diffJones_{s'} = 
0$. Corollary \ref{cor:Jones_decomposition} implies that
$$\jones(L) = \sum_{i=0}^{s-1}p^{i}\diffJones_{i}.$$
Now argue as in the proof of the previous Theorem to obtain the desired result.
\end{proof}

\section{State-sum formula}
\label{sec: state-sum formula}

One of the most important properties of the Jones polynomial is that it can be written as a certain 
sum over all Kauffman states obtained from the given diagram. This point of view on the Jones 
polynomial was pioneered in \cite{Kauffman}. In particular, if $D$ is a link diagram, then the 
Jones polynomial of the corresponding links expands into the following sum

$$\jones(D) = (-1)^{n_{-}(D)} q^{n_{+}(D) - 2n_{-}(D)} \sum_{s \in 
\mathcal{S}(D)} (-q)^{r(s)} (q+q^{-1})^{k(s)},$$

\noindent where $k(s)$ denotes the number of components of the Kauffman state $s$ and $r(s)$ 
denotes the number of $1$-smoothings used to get the Kauffman state $s$, see 
\cite{Turner}*{Lect. 1}. As it turns out, it is possible to develop analogous expansions for 
the difference 
Jones polynomials.

\begin{thm}\label{thm: state sum formula}
Let $D$ be a $p^n$-periodic diagram of a link and let $0 \leq m \leq n$. Under this assumptions, 
the following equality holds.
$$\diffJones_{n,n-m}(D) = (-1)^{n_{-}(D)} q^{n_{+}(D) - 2n_{-}(D)} \sum_{m \leq v \leq n} 
\sum_{s \in \mathcal{S}^{p^{v}}(D)} (-q)^{r(s)} \diffJones_{v,v-m}(s).$$
For a Kauffman state $s$ we write $r(s) = r$ if $s \in \mathcal{S}_{r}(D)$, compare Definition 
\ref{defn: Kauffman states of periodic diagram}.
\end{thm}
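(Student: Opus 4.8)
The plan is to mimic the derivation of the classical state-sum formula, but to carry it out one orbit of crossings at a time, keeping track of which equivariant summand each Kauffman state contributes to. The starting point is the long exact sequence / cube-of-resolutions structure that underlies Theorem~\ref{thm:spectral_sequence_equivariant_homology}: resolving a single orbit $X$ of $p^n$ crossings of $D$ breaks the Khovanov complex of $D$ into a bicomplex whose columns are (shifted) Khovanov complexes of the partially resolved diagrams $D_\alpha$, $\alpha \in \mathcal{B}_k(X)$. Passing to the $E_1$-page and taking Euler characteristics in the internal grading turns this into an additive relation among the $d$-th equivariant Jones polynomials of $D$ and of the diagrams $D_\alpha$, with the shifts $c(D_\alpha)$, $q(\alpha)$ contributing the appropriate powers of $q$ and signs. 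The combinatorial content of the $E_1$-page in Theorem~\ref{thm:spectral_sequence_equivariant_homology} — namely that $\alpha$ with $|\alpha|_1 = i = p^{u_i}g$ contributes summands indexed by orbits $\overline{\mathcal{B}}_i^{p^v}(X)$ living in the $k(v,s)$-equivariant homology of $D_\alpha$ (now viewed as a $p^v$-periodic diagram) with multiplicity $\ell(v,s)$ — is exactly what will reorganize, after summing over all $i$, into the double sum $\sum_{m \le v \le n}\sum_{s \in \mathcal{S}^{p^v}(D)}$ appearing in the statement. The overall prefactor $(-1)^{n_-(D)} q^{n_+(D)-2n_-(D)}$ is extracted in the usual way from the normalization of the Khovanov complex, once one checks that the bookkeeping of $n_\pm$ under resolving an orbit is consistent.

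Concretely I would proceed as follows. First, establish the single-orbit relation: for a $p^n$-periodic diagram $D$ and a chosen orbit $X$, express $\khp_{p^n,d}(D)$ (hence $\jones_{p^n,d}(D)$) as an alternating sum over the $E_1$-terms of the spectral sequence, using that the spectral sequence degenerates at the level of Euler characteristics (the internal $q$-grading is preserved by all differentials, so $\sum (-1)^{\text{homological degree}} \qdim$ is the same on every page). Second, translate this into difference polynomials by taking the combinations $\jones_{p^n,p^s} - \jones_{p^n,p^{s+1}}$; here the functions $k(s,v)$ and $\ell(s,v)$ from Theorem~\ref{thm:spectral_sequence_equivariant_homology} collapse nicely, since for a fixed orbit-type $p^v$ the contribution to $\diffJones_{n,n-m}$ survives only when $v \ge m$, and when it does it is precisely $\diffJones_{v,v-m}$ of the resolved diagram, with multiplicity one after passing to $\z{p^n}$-orbits of states. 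Third, iterate over all orbits of crossings of $D$: each iteration replaces a diagram by its $0$- and $1$-smoothings along one orbit, multiplying in a factor of $(-q)$ per $1$-smoothed crossing and adjusting the grading shift; after resolving every orbit one is left with crossingless diagrams, i.e.\ genuine Kauffman states $s$, each of which inherits some symmetry $\Iso_D(s) = \z{p^v}$ and contributes $(-q)^{r(s)} \diffJones_{v,v-m}(s)$. Fourth, collect the prefactors: the accumulated sign and power of $q$ must be shown to equal $(-1)^{n_-(D)} q^{n_+(D) - 2n_-(D)}$, which is the same computation as in the classical Kauffman-state expansion since the equivariant shifts telescope along a resolution path.

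I expect the main obstacle to be the second and third steps done together, i.e.\ controlling the multiplicities and equivariant-grading labels as orbits are resolved one at a time. The $E_1$-page in Theorem~\ref{thm:spectral_sequence_equivariant_homology} is stated for a single orbit, so iterating requires knowing that a partially resolved diagram $D_\alpha$, when it retains a symmetry of order $p^v$, really is a $p^v$-periodic diagram to which the same machinery applies, and that the contribution of a $p^v$-periodic state $s$ to $\diffJones_{n,n-m}(D)$ is literally $\diffJones_{v,v-m}(s)$ rather than some rescaled version — this is where the identities for $k(s,v)$, $\ell(s,v)$ and the relation $\varphi(p^{n-s}) = p^{n-s} - p^{n-s-1}$ must be used carefully, and where an induction on the number of crossing-orbits is needed, the base case being the computation of $\diffJones_{v,v-m}$ for crossingless periodic links from Example~\ref{ex: diffJones trivial links}. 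The sign and grading bookkeeping (the fourth step) is routine once the structural statement is in place, being formally identical to the classical argument recalled before Theorem~\ref{thm: state sum formula}.
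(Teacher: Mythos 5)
Your outline is sound, but it takes a genuinely different route from the paper. The paper does not iterate the single-orbit spectral sequence at all: it uses that $\jones_{p^{n},p^{n-s}}(D)$ is the graded Euler characteristic of $\Hom{}{\cyclicGroupAlg{p^{n}}}{\cyclotomicField{p^{n-s}}}{\cc(D;\qqq)}$, so everything can be read off at the chain level in one step. For each homological degree one decomposes the chain group by the isotropy of Kauffman states (each $\z{p^{n}}$-orbit of states spans an induced module), and the same semisimplicity/Frobenius-reciprocity computation that produces the $E_{1}$-page of Theorem \ref{thm:spectral_sequence_equivariant_homology} --- the $k(v,s)$, $\ell(v,s)$ bookkeeping you mention --- immediately gives contributions $\jones_{p^{v},1}(s)$ or $\jones_{p^{v},p^{v-s}}(s)$; taking the difference in $s$ then yields the theorem, with the prefactor coming directly from the standard degree shifts. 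Your plan, iterating the one-orbit identity (in effect Corollary \ref{cor:difference_equivariant_jones_polynomials}) until no crossings remain, can be made to work, but it is heavier than you suggest and the deferred steps are exactly the substance: once the isotropy of a partial resolution drops to $\z{p^{v}}$, an original orbit of crossings splits into $p^{n-v}$ smaller orbits, so the recursion is not ``one step per orbit of $D$'' but a resolution tree over diagrams of varying period, and at each node the normalizations $c(D_{\alpha})$, $q(\alpha)$ refer to auxiliary orientations of the intermediate diagrams, so the telescoping to $(-1)^{n_{-}(D)}q^{n_{+}(D)-2n_{-}(D)}(-q)^{r(s)}$ is precisely what remains to be proved; the paper's chain-level argument buys freedom from all of this induction and re-orientation bookkeeping. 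One remark of yours is worth keeping and making precise in either approach: since the multiplicities $\ell(v,s)$ combine with the factors $\varphi(p^{v-s})$ to give exactly $\varphi(p^{n-s})$, each $\z{p^{n}}$-orbit of states contributes once after taking $\qdim_{\cyclotomicField{p^{n-s}}}$, so the natural index set of the state sum is $\overline{\mathcal{S}}^{p^{v}}(D)$ (equivalently, states weighted by $p^{-(n-v)}$); you should reconcile this with the sum over $\mathcal{S}^{p^{v}}(D)$ in the statement, noting that for $v=n$ --- the only case entering $\diffJones_{n,0}$ and the Murasugi application --- the two coincide.
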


\begin{rem}
The proof of the above Theorem is deferred to Section \ref{sec: proofs}.
\end{rem}

An application of the state sum formula for the difference Jones polynomials is concerned with 
the following classical periodicity criterion of Murasugi \cite{Murasugi1}.

\begin{thm}
Let $D$ be a $p^{n}$-periodic link diagram. Let $D_{\ast}$ denote the quotient diagram.
$$\jones(D) \equiv \jones(D_{\ast})^{p^{n}} \pmod{p, (q+q^{-1})^{\alpha(D)(p^{n}-1)}-1},$$
where
$$\alpha(D) = \left\{ 
\begin{array}{ll}
1, & 2 \nmid \lk(D,F), \\
2, & 2 \mid \lk(D,F).
\end{array}
\right.$$
Above, $F$ denotes the fixed point set.
\end{thm}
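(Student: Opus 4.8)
The plan is to combine the state-sum formula of Theorem~\ref{thm: state sum formula} (specialised to $m=0$, so that the left-hand side is $\diffJones_{n,0}(D)$) with Corollary~\ref{cor:Jones_decomposition} and then reduce modulo $p$. First I would write, using Corollary~\ref{cor:Jones_decomposition},
$$\jones(D) = \sum_{s=0}^{n} p^{s}\diffJones_{n,s}(D),$$
so that modulo $p$ only the $s=0$ term survives: $\jones(D) \equiv \diffJones_{n,0}(D) \pmod{p}$. The same applies to the quotient diagram $D_{\ast}$ viewed as a diagram with no (nontrivial) symmetry, so $\jones(D_{\ast}) \equiv \diffJones_{0,0}(D_{\ast}) = \jones_{1,1}(D_{\ast}) = \jones(D_{\ast})$; more usefully, I will need to relate $\diffJones_{v,v}(s)$ for the symmetric Kauffman states $s$ to ordinary Jones polynomials of their quotients. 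The key point is that a Kauffman state $s \in \mathcal{S}^{p^{v}}(D)$ is itself a $p^{v}$-periodic crossingless diagram, i.e.\ a trivial link with some free orbits and some fixed circles, so Example~\ref{ex: diffJones trivial links} (equivalently Proposition~\ref{prop: equiv kh trivial links}) computes $\diffJones_{v,v-m}(s)$ explicitly; in particular for $m=0$ we get $\diffJones_{v,0}(s) = \qdim M^{k,f}_{0}$ in the notation there.

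Next I would feed this into the state-sum formula with $m=0$:
$$\diffJones_{n,0}(D) = (-1)^{n_{-}(D)} q^{n_{+}(D) - 2n_{-}(D)} \sum_{0 \leq v \leq n} \sum_{s \in \mathcal{S}^{p^{v}}(D)} (-q)^{r(s)} \diffJones_{v,0}(s).$$
Now I reduce the right-hand side modulo $p$. The action of $\z{p^{n}}$ permutes the Kauffman states in each $\mathcal{S}_{r}^{d}(D)$, and a state $s \in \mathcal{S}^{p^{v}}(D)$ has stabiliser exactly $\z{p^{v}}$, hence an orbit of size $p^{n-v}$. Therefore the contributions of all states with $v<n$ come in $\z{p^{n}}$-orbits of size divisible by $p$ (indeed, the orbit of such an $s$ under $\z{p^n}/\z{p^{v}}$ has size $p^{n-v} \geq p$, and all states in the orbit have the same $r(s)$ and isomorphic quotients hence the same $\diffJones_{v,0}$-value), so their total contribution vanishes mod $p$. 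This leaves only the $v=n$ term, i.e.\ the fully symmetric states, which are exactly the states of the form $D_{n}(T_{1},\dots,T_{1})$ for $T_{1}$ a single Kauffman state of the quotient tangle; these are in bijection with $\mathcal{S}(D_{\ast})$, with $r(s) = p^{n} r(s_{\ast})$ and $c$-data multiplying accordingly. Matching $n_{\pm}(D) = p^{n} n_{\pm}(D_{\ast})$ and using $\diffJones_{n,0}(s) \equiv (q+q^{-1})^{?} \pmod{?}$ from Example~\ref{ex: diffJones trivial links}, the surviving sum should reorganise into $\jones(D_{\ast})^{p^{n}}$ up to the correction coming from $(q+q^{-1})^{k}$ versus $\qdim M^{k,f}_{0}$.

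The main obstacle — and the place where the modulus $(q+q^{-1})^{\alpha(D)(p^{n}-1)}-1$ and the parity function $\alpha(D)$ enter — is precisely this last comparison. For a fully symmetric Kauffman state $s$ coming from a quotient state $s_{\ast}$ with $k(s_{\ast})$ circles, the naive ``$p^{n}$-th power'' guess would give $(q+q^{-1})^{p^{n}k(s_{\ast})}$, but the actual value $\diffJones_{n,0}(s) = \qdim M^{k,f}_{0}$ records the combinatorics of how the $k(s_{\ast})$ circles lift: a circle of the quotient lifts either to $p^{n}$ freely-permuted circles (contributing a factor behaving like $(q+q^{-1})^{p^{n}}$) or, when it links $F$, to a single circle or to $p^{n-1}$-or-fewer circles depending on the linking number with $F$. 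Tracking this is where $\lk(D,F)$ and its parity control whether the relevant exponent is an honest multiple of $p^{n}$ or only of $p^{n}$ after working modulo $(q+q^{-1})^{\alpha(D)(p^{n}-1)}-1$ — this identity lets one replace $(q+q^{-1})^{p^{n}}$ by $(q+q^{-1})$ (when $\alpha=1$) or $(q+q^{-1})^{2}$ by a constant in the appropriate sense, reconciling the two expressions. I would therefore carry out the bookkeeping of circle-lifts against the fixed axis carefully, invoke Example~\ref{ex: diffJones trivial links} and Proposition~\ref{prop: equiv kh trivial links} to pin down $\qdim M^{k,f}_{0}$, and then verify that $\diffJones_{n,0}(s) \equiv \bigl((q+q^{-1})^{k(s_{\ast})}\bigr)^{\text{appropriate power}} \pmod{p,\ (q+q^{-1})^{\alpha(D)(p^{n}-1)}-1}$, which, summed over $s_{\ast} \in \mathcal{S}(D_{\ast})$ with the state-sum weights, yields exactly $\jones(D_{\ast})^{p^{n}}$ modulo the stated ideal.
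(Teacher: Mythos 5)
Your overall route is the paper's: reduce $\jones(D)$ to $\diffJones_{n,0}(D)$ modulo $p$ via Corollary \ref{cor:Jones_decomposition}, expand $\diffJones_{n,0}(D)$ by the state-sum formula, identify the fully symmetric Kauffman states with the states of the quotient diagram, and compare their contributions with $\jones(D_{\ast})^{p^{n}}$ using the trivial-link computation. However, your instantiation of Theorem \ref{thm: state sum formula} is wrong: with $m=0$ its left-hand side is $\diffJones_{n,n}(D)$, not $\diffJones_{n,0}(D)$; the specialization you need is $m=n$, and then the outer sum runs over $v=n$ only, so \emph{only} the $\z{p^n}$-symmetric states appear. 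The displayed formula you wrote (a sum over all $0\le v\le n$ with summands $\diffJones_{v,0}(s)$) is not an instance of the theorem for any $m$, and it is false as an identity, since the $v<n$ terms are generically nonzero. Your orbit-counting argument modulo $p$ is the classical way to prune non-symmetric states from Kauffman's ordinary state sum and would be fine if you ran it on $\jones(D)$ directly, but in the paper's framework it is unnecessary: choosing $m=n$ already eliminates everything except the symmetric states. This part of the proposal is repairable, but as written the key displayed step does not hold.

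The genuine gap is that the step which actually produces the modulus $(q+q^{-1})^{\alpha(D)(p^{n}-1)}-1$ and the parity function $\alpha(D)$ is only promised (``carry out the bookkeeping''), not performed. What is needed, and what the paper does, is: (i) for a fully symmetric state $s$ with $k$ free orbits of circles and $f$ invariant circles, Example \ref{ex: diffJones trivial links} gives $\diffJones_{n,0}(s)=(q^{p^n}+q^{-p^n})^{k}(q+q^{-1})^{f}\equiv (q+q^{-1})^{kp^{n}+f} \pmod{p}$; (ii) the quotient state $s_{\ast}$ has $k+f$ circles, and after raising the quotient state sum to the $p^{n}$-th power modulo $p$ (freshman's dream), using $n_{\pm}(D)=p^{n}n_{\pm}(D_{\ast})$, $r(s)=p^{n}r(s_{\ast})$ and oddness of $p$ for the signs, its contribution is $(q+q^{-1})^{p^{n}(k+f)}$ with the same sign and $q$-power; (iii) the difference of the two contributions is $(q+q^{-1})^{kp^{n}+f}\bigl((q+q^{-1})^{f(p^{n}-1)}-1\bigr)$, which lies in the stated ideal because $f\equiv\lk(D,F)\pmod{2}$ forces $\alpha(D)\mid f$, hence $(q+q^{-1})^{\alpha(D)(p^{n}-1)}-1$ divides $(q+q^{-1})^{f(p^{n}-1)}-1$. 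None of these computations appears in your sketch, and your heuristic about circle lifts is partly incorrect: in a symmetric crossingless state every circle is either in a free orbit of length $p^{n}$ or is a single invariant circle encircling the axis, so a quotient circle never lifts to ``$p^{n-1}$-or-fewer'' circles. So the skeleton is right and matches the paper, but the comparison that constitutes the actual content of the theorem is missing.
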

\begin{proof}
Let us analyze the relation between $\diffJones_{n,0}(D)$ and $\jones(D_{\ast})$.

\begin{prop}
The following congruence holds.
$$\diffJones_{n,0}(D) \equiv \jones(D_{\ast})^{p^{n}} \pmod{p, (q+q^{-1})^{\alpha(D)(p^{n}-1)}-1}.$$
\end{prop}
\begin{proof}
Notice that the state formula for $\diffJones_{n,0}(D)$ involves only Kauffman states which inherit 
$\z{p^{n}}$-symmetry. Such Kauffman states correspond bijectively to the Kauffman states of the 
quotient diagram.

Let $s$ be a Kauffman state obtained from $D$ such that $\Iso(s) = \z{p^{n}}$. Assume that $s$ 
consists of $k$ free orbits and $f$ fixed circles. Thus, according to Example \ref{ex: diffJones 
trivial links} the Kauffman state contributes
\begin{align*}
&(-1)^{n_{-}(D)+r(s)} q^{n_{+}(D) - 2n_{-}(D)+r(s)} (q^{p^{n}} + q^{-p^{n}})^{k}(q+q^{-1})^f \equiv 
\\
&\equiv (-1)^{n_{-}(D)+r(s)} q^{n_{+}(D) - 2n_{-}(D)+r(s)} (q+q^{-1})^{kp^{n} + f} \pmod{p}.
\end{align*}
to the state sum for $\diffJones_{n,0}$. The quotient Kauffman state $s_{\ast}$ consists of $k+f$ 
components. Hence, it contributes
$$(-1)^{p^{n} (n_{-}(D_{\ast}) +r(s_{\ast}))} q^{p^{n}(n_{+}(D_{\ast}) - 2n_{-}(D_{\ast}) + 
r(s_{\ast}))} (q+q^{-q})^{p^{n}(k+f)}$$
to $\jones(D_{\ast})^{p^{n}}$ mod $p$. The difference of both contributions is divisible by 
$$(q+q^{-1})^{f(p^{n}-1)}-1.$$
Since
$$f \equiv \lk(D,F) \pmod{2},$$
the proposition follows.
\end{proof}

We can conclude the proof once we note that
$$\jones(D) \equiv \diffJones_{n,1}(D) \pmod{p}$$
by Corollary \ref{cor:Jones_decomposition}.
\end{proof}

\section{Proofs}
\label{sec: proofs}

This section is entirely devoted to the presentation of proofs of Theorems 
\ref{thm:skein_relations_equiv_jones_pol} and \ref{thm: state sum formula}. These two proofs are 
very alike in principle, therefore we only perform detailed calculations in the first proof, 
because the very same calculations are present in the second proof as well.

\subsection{Proof of Theorem \ref{thm:skein_relations_equiv_jones_pol}}
\noindent Let us begin with a definition.

\begin{defn}\label{defn:Poincare_polynomial_spectral_seq}
Let $\{E_{s}^{\ast,\ast}, d_s\}$ be a spectral sequence of graded 
finite-dimensional $\mathbb{F}$-modules, where $\mathbb{F}$ is a field, converging to some 
doubly-graded finite-dimensional $\mathbb{F}$-module $H^{\ast, \ast}$. Suppose that the spectral 
sequence collapses at some finite stage. Define the Poincar\'e polynomial of the $E_{s}$ page to be 
the following polynomial.
$$P(E_{s})(t,q) = \sum_{i,j} t^{i+j} \qdim_{\mathbb{F}} E_s^{i,j}$$
Poincar\'e polynomial admits the following decomposition
$$P(E_{s}) = \sum_{i} t^{i} P_{i}(E_{s}),$$
where
$$P_{i}(E_{s}) = \sum_{j} t^{j} \qdim_{\mathbb{F}} E^{i,j}_{s}.$$
\end{defn}

\begin{lemma}\label{lemma:property_poincare_polynomial_spectral_seq}
For any $s > 0$ the following equality holds, whenever it makes sense,
$$P(E_{s})(-1,q) = P(E_{\infty})(-1,q).$$
\end{lemma}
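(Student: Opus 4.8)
The plan is to use the fact that passing from the $E_s$ page to the $E_{s+1}$ page only changes the Poincaré polynomial by quantities that vanish after the substitution $t = -1$. Concretely, for each $s$, the page $E_{s+1}$ is the homology of $(E_s, d_s)$, where $d_s$ is a differential of bidegree $(r_s, -r_s+1)$ in the $(i,j)$-grading (or whatever the relevant bidegree is; the key point is only that it shifts the homological grading $i$ by a fixed amount). First I would recall the standard rank computation: if $d_s \colon E_s^{i,j} \to E_s^{i',j'}$ with $i' = i + c$ for a fixed constant $c$ depending only on $s$, then
$$\qdim E_{s+1}^{i,j} = \qdim E_s^{i,j} - \qdim(\image d_s)^{i,j} - \qdim(\image d_s)^{i+c,j+c'},$$
where the two correction terms record, respectively, the image landing in bidegree $(i,j)$ and the kernel quotient losing the image leaving bidegree $(i,j)$. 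Here I am using that over a field every module is free, so ranks add in short exact sequences and $\qdim$ is additive.

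Next I would plug this into the definition of $P(E_{s+1})$. Summing over all $(i,j)$ and weighting by $t^{i+j}$, the two correction terms contribute, up to the $q$-bookkeeping, $t^{i+j} \qdim(\image d_s)$ from the target side and $t^{(i)+(j)} \qdim (\image d_s)$ from the source side; since the source of the piece of $\image d_s$ sitting in total degree $N$ has total degree $N - (c + c')$ where $c + c' = 1$ by the constraint that $d_s$ has total degree $+1$ (it is a differential on a complex whose total degree is the sum $i+j$), the two contributions are $t^N$ and $t^{N-1}$ times the same quantum dimension. Hence
$$P(E_{s+1})(t,q) = P(E_s)(t,q) - (1 + t^{-1}) \sum_{N} t^{N} Q_N(q)$$
for suitable Laurent polynomials $Q_N(q) \in \gpRing{q,q^{-1}}$ recording the graded ranks of $\image d_s$. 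The factor $1 + t^{-1}$ (equivalently $t + 1$ up to an overall power of $t$) vanishes at $t = -1$, so $P(E_{s+1})(-1,q) = P(E_s)(-1,q)$.

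Finally, since the spectral sequence collapses at a finite stage, iterating this identity from the given $s$ up to that stage yields $P(E_s)(-1,q) = P(E_\infty)(-1,q)$, which is the claim. The main obstacle I anticipate is purely bookkeeping: getting the grading conventions of the differentials $d_s$ right (the bidegree in the $(i,j)$ indexing used in Definition \ref{defn:Poincare_polynomial_spectral_seq}) so that the combined correction term genuinely factors through $(1+t^{-1})$ rather than through some other polynomial in $t$; this is what forces the evaluation at $t=-1$ rather than any other value, and it is exactly the Euler-characteristic phenomenon that makes the alternating sum a spectral-sequence invariant. No deep input is needed beyond additivity of $\qdim$ over short exact sequences of graded vector spaces and the fact that $P(E_s)(-1,q)$ is, up to sign conventions, the graded Euler characteristic of the page.
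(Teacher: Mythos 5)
Your argument is correct and is in substance the same as the paper's: the paper disposes of the lemma by citing the standard fact (McCleary, Ex.~1.7) that the graded Euler characteristic is unchanged from page to page, and your computation --- pairing the image of $d_s$ arriving in total degree $N$ with the corresponding loss in total degree $N-1$, so the correction factors through $t+1$ and dies at $t=-1$, then iterating up to the finite collapse stage --- is precisely the proof of that cited fact.
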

\begin{proof}
This is a direct consequence of \cite{McCleary}*{Ex. 1.7}.
\end{proof}

\noindent Let us now analyze $E_{1}$ pages of the spectral sequences from Theorem 
\ref{thm:spectral_sequence_equivariant_homology}, for odd $p$. Let us make the following notation. 
If $1 \leq i \leq p^{n}-1$, then $i =  p^{u_{i}} g$, where $\gcd(p,g)=1$. Poincar\'e polynomials of 
columns are given below.

\begin{align}
\label{align: khp 0 column} P_{0}({}_{p^{n-s}}E_{1}) &= t^{c(D_{\alpha_{0}})} 
q^{3c(D_{\alpha_{0}})} 
\khp_{p^{n},p^{n-s}}(D_{\alpha_{0}}), \\
\label{align: khp last column} P_{p^{n}}({}_{p^{n-s}}E_{1}) &= t^{c(D_{\alpha_{1}})} q^{3 
c(D_{\alpha_{1}}) + 2 p^{n}} 
\khp_{p^{n},p^{n-s}}(D_{\alpha_{1}}), \\
\label{align: khp middle columns}P_{i}({}_{p^{n-s}}E_{1}) &= \sum_{0 \leq v \leq \min{(s, u_{i}})} 
\sum_{\alpha \in 
\overline{\mathcal{B}}_{i}^{p^{v}}(X)} t^{c(D_{\alpha})} q^{i + 3c(D_{\alpha}) + p^{n}} 
\khp_{p^{v},p^{1}}(D_{\alpha}) + \nonumber \\
&+ \sum_{\min{(s, u_{i})} < v \leq u_{i}} \sum_{\alpha \in \overline{\mathcal{B}}_{i}^{p^{v}}} 
t^{c(D_{\alpha})} q^{i + 3c(D_{\alpha}) + p^{n}} \khp_{p^{v},p^{v-s}}(D_{\alpha}).
\end{align}

\noindent In order to make further computations more manageable let us introduce the following 
notation.
\begin{align*}
G_{i}(v,w) &= \sum_{\alpha \in \overline{\mathcal{B}}_{i}^{p^{v}}} t^{c(D_{\alpha})} 
q^{3c(D_{\alpha})} \khp_{p^{v},p^{w}}(D_{\alpha}), \\
DJG_{i}(v,w) &= G_{i}(v,w)(-1,q) = \sum_{\alpha \in \overline{\mathcal{B}}_{i}^{p^{v}}} 
(-1)^{c(D_{\alpha})} q^{3c(D_{\alpha})} \diffJones_{v,w}(D_{\alpha})
\end{align*}
so for $1 \leq i \leq p^{n} - 1$ the Poincar\'e polynomial can be expressed as the following more 
compact sum.
\begin{align}
\label{align: khp middle column compact}
P_{i}({}_{p^{n-s}E_{1}}) &= q^{i+p^{n}}\sum_{v = 0}^{\min(s,u_{i})} G_{i}(v,0) + \\
&= q^{i+p^{n}} \sum_{v = \min(s,u_{i}) + 1}^{u_{i}} G_{i}(v,v-s).
\end{align}

\begin{lemma}
Following formula holds for the Poincar\'e polynomials
\begin{align*}
&P({}_{p^{n-s}}E_1) - P({}_{p^{n-s+1}}E_1) = \\ 
&\sum_{1 \leq j \leq p^{n-s}-1} t^{j\cdot p^{s}} q^{j \cdot p^{s} + p^{n}} \sum_{v = s}^{u_i} 
\left(G_{jp^{s}}(v,s-v) - G_{jp^{s}}(v,v-s+1)\right) \\
&+t^{c(D_{\alpha_0})} q^{3c(D_{\alpha_0}) + p^n} \left( \khp_{p^{n},p^{n-s}}(D_{\alpha_0}) - 
\khp_{p^{n},p^{n-s+1}}(D_{\alpha_0}) \right) \\
&+t^{c(D_{\alpha_1}) + p^n} q^{3 c(D_{\alpha_1}) + 2p^n} \left( \khp_{p^{n},p^{n-s}}(D_{\alpha_1}) 
- \khp_{p^{n},p^{n-s+1}}(D_{\alpha_1}) \right).
\end{align*}
\end{lemma}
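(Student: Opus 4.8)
The plan is to compute the difference $P({}_{p^{n-s}}E_1) - P({}_{p^{n-s+1}}E_1)$ column by column, using the explicit formulas \eqref{align: khp 0 column}, \eqref{align: khp last column} and \eqref{align: khp middle column compact} for the Poincar\'e polynomials of the columns of the $E_1$ page. The zeroth and $p^n$-th columns are immediate: they contribute exactly the two boundary terms in the claimed formula, since $c(D_{\alpha_0})$, $c(D_{\alpha_1})$ and the shifts $q^{3c+p^n}$, $q^{3c+2p^n}$ do not depend on $s$, so subtracting the $s$ and $s-1$ versions just replaces $\khp_{p^n,p^{n-s}}$ by $\khp_{p^n,p^{n-s}} - \khp_{p^n,p^{n-s+1}}$ in each. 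So the whole content is in the middle columns $1 \le i \le p^n - 1$.

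For a fixed middle column index $i = p^{u_i} g$ with $\gcd(p,g) = 1$, I would write, using \eqref{align: khp middle column compact},
\begin{align*}
P_i({}_{p^{n-s}}E_1) - P_i({}_{p^{n-s+1}}E_1) = q^{i+p^n}\Biggl( &\sum_{v=0}^{\min(s,u_i)} G_i(v,0) - \sum_{v=0}^{\min(s-1,u_i)} G_i(v,0) \\ + &\sum_{v=\min(s,u_i)+1}^{u_i} G_i(v,v-s) - \sum_{v=\min(s-1,u_i)+1}^{u_i} G_i(v,v-s+1)\Biggr).
\end{align*}
The key case distinction is whether $u_i < s$, $u_i = s-1$ is excluded, $u_i \ge s$, etc. When $u_i \le s-1$ both ``tail'' sums (the $G_i(v,v-s)$ and $G_i(v,v-s+1)$ sums) are empty and the two $G_i(v,0)$-sums run over the same range $0 \le v \le u_i$, so the column contributes nothing. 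When $u_i \ge s$, the first $G_i(v,0)$-sum gains the single extra term $v=s$, i.e. $+G_i(s,0)$; meanwhile the tail of the $s$-version starts at $v = s+1$ while the tail of the $(s-1)$-version starts at $v=s$, and in the overlap $s+1 \le v \le u_i$ the summands differ by $G_i(v,v-s) - G_i(v,v-s+1)$, plus there is the leftover term $-G_i(s, (s)-(s-1)) = -G_i(s,1)$ from the $(s-1)$-tail at $v=s$. One checks that $G_i(s,0)$ combined with the $v=s$ term of the $(s-1)$-tail matches the ``$v=s$'' instance of the general summand $G_i(v,s-v) - G_i(v,v-s+1)$ (since at $v=s$ one has $G_i(s,s-v)=G_i(s,0)$), so after reindexing the column contributes exactly $q^{i+p^n}\sum_{v=s}^{u_i}\bigl(G_i(v,s-v) - G_i(v,v-s+1)\bigr)$, which only survives when $u_i \ge s$, i.e. when $p^s \mid i$. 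Writing $i = jp^s$ with $1 \le j \le p^{n-s}-1$ (the constraint $i \le p^n-1$ forces $j \le p^{n-s}-1$) and noting that the leading $t$-power is $t^{j p^s}$ while $q^i = q^{jp^s}$, summing over such $i$ gives precisely the first line of the claimed formula.

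I expect the main obstacle to be the bookkeeping at the boundary index $v=s$ of the sums: the two pieces (the extra $v=s$ term of the ``head'' sum $\sum G_i(v,0)$ and the orphaned $v=s$ term of the ``tail'' sum of the $(s-1)$-version) have to be combined and recognized as the $v=s$ instance of the uniform summand $G_i(v,s-v) - G_i(v,v-s+1)$; this relies on the consistency of the definition of $G_i(v,w)$ for the two natural values of the second index at $v=s$, together with the fact that in the $E_1$-page formula of Theorem \ref{thm:spectral_sequence_equivariant_homology} the cases $v \le s$ and $v > s$ of $k(s,v)$, $\ell(s,v)$ are glued together correctly. Once that identification is made, the rest is a routine telescoping and reindexing, and the two boundary columns drop in without any work. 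I would present the argument as: (i) handle columns $0$ and $p^n$; (ii) for $1 \le i \le p^n-1$ do the subtraction and split on $u_i < s$ versus $u_i \ge s$; (iii) in the surviving case, reindex and sum over $i = jp^s$ to assemble the stated identity.
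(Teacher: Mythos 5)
Your proposal is correct and follows essentially the same route as the paper: the paper's proof simply asserts the column-wise difference formula (zero when $p^{s} \nmid i$, the telescoped sum $\sum_{v=s}^{u_i}\left(G_i(v,v-s)-G_i(v,v-s+1)\right)$ when $p^{s} \mid i$) and declares it ``easily verified'' from formula (\ref{align: khp middle column compact}), which is exactly the head/tail bookkeeping and reindexing $i = jp^{s}$ that you carry out explicitly. Your derivation also implicitly corrects the paper's slip of writing $G_i(v,s-v)$ for what must be $G_i(v,v-s)$ (the two agree at $v=s$), so no changes are needed.
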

\begin{proof}
Indeed, because
\begin{align*}
&P_{i}({}_{p^{n-s}}E_{1}) - P_{i}(E_{p^{n-s+1}}E_{1}) = \\
&= \left\{
\begin{array}{ll}
\sum_{v = s}^{u_{i}} \left(G_{i}(v,s-v)) - G_{i}(v,v-s+1)\right), & p^{s} \mid i, \\
0, & p^{s} \nmid i,
\end{array}
\right.
\end{align*}
which can be easily verified using formula (\ref{align: khp middle column compact}).
\end{proof}

\begin{cor}\label{cor:difference_equivariant_jones_polynomials}
The following formula holds for the difference polynomials
\begin{align*}
&\diffJones_{n,n-s}(D) = \\
&\sum_{1 \leq j \leq p^{n-s}-1} (-1)^{j\cdot p^{s}} q^{j \cdot p^{s} + p^{n}} \sum_{v=s}^{u_{i}} 
DJG_{jp^{s}}(v,v-s) \\
&+(-1)^{c(D_{\alpha_0})} q^{3c(D_{\alpha_0}) + p^n} \diffJones_{n-s}(D_{\alpha_{0}}) \\
&+(-1)^{c(D_{\alpha_1}) + p^n} q^{3 c(D_{\alpha_1}) + 2p^n} \diffJones_{n-s}(D_{\alpha_{1}}).
\end{align*}
\end{cor}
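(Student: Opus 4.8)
The plan is to obtain Corollary~\ref{cor:difference_equivariant_jones_polynomials} as an immediate numerical consequence of the preceding Lemma together with Lemma~\ref{lemma:property_poincare_polynomial_spectral_seq}. The key point is that the spectral sequences of Theorem~\ref{thm:spectral_sequence_equivariant_homology} converge to $\kh_{\z{p^n}}^{\ast,\ast,p^{n-s}}(D;\qqq)$, so the total Poincar\'e polynomial of the $E_1$ page, evaluated at $t=-1$, equals the Poincar\'e polynomial of $E_\infty$ at $t=-1$, which is precisely $\jones_{p^n,p^{n-s}}(D)$ up to the overall grading shift coming from $c(D)$ (note $c(D)=0$ here since we have not resolved any crossings, so no shift is needed). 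Subtracting the corresponding identity for the index $s-1$ in place of $s$ (equivalently, for the spectral sequence computing $\kh^{\ast,\ast,p^{n-s+1}}$) yields $\diffJones_{n,n-s}(D)$ on the left-hand side.

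First I would record that, by Definition~\ref{defn:difference Jones polynomials} and the definition of the Poincar\'e polynomial, $\diffJones_{n,n-s}(D) = P({}_{p^{n-s}}E_\infty)(-1,q) - P({}_{p^{n-s+1}}E_\infty)(-1,q)$; by Lemma~\ref{lemma:property_poincare_polynomial_spectral_seq} this equals $P({}_{p^{n-s}}E_1)(-1,q) - P({}_{p^{n-s+1}}E_1)(-1,q)$. Then I would simply substitute $t=-1$ into the three-line formula for $P({}_{p^{n-s}}E_1) - P({}_{p^{n-s+1}}E_1)$ provided by the Lemma. In the first (middle-column) sum, the factor $t^{j p^s}q^{j p^s + p^n}$ becomes $(-1)^{j p^s}q^{j p^s + p^n}$, and since $DJG_i(v,w) = G_i(v,w)(-1,q)$ by definition, the inner expression $G_{jp^s}(v,s-v) - G_{jp^s}(v,v-s+1)$ evaluated at $t=-1$ must be rewritten. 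Here I need to note that $DJG_i(v,w)$ is built from $\diffJones_{v,w} = \jones_{p^v,p^w} - \jones_{p^v,p^{w+1}}$, so the pair $(s-v)$ and $(v-s+1)$ — which are $w$ and $w+1$ for $w = s-v$ only when... — actually the correct reading is that $G_i(v,v-s) - G_i(v,v-s+1)$ evaluated at $t=-1$ gives $DJG_i(v,v-s)$, and I would verify that the indices $s-v$ versus $v-s$ in the Lemma's statement are consistent once one tracks that $\khp_{p^v,p^{v-s}}$ is the relevant term for $v>s$ (see~\eqref{align: khp middle columns}), so that $\khp_{p^v,p^{v-s}} - \khp_{p^v,p^{v-s+1}}$ evaluates to $\diffJones_{v,v-s}$; this matches $DJG_{jp^s}(v,v-s)$.

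For the two boundary columns, substituting $t=-1$ turns $t^{c(D_{\alpha_0})}q^{3c(D_{\alpha_0})+p^n}\bigl(\khp_{p^n,p^{n-s}}(D_{\alpha_0}) - \khp_{p^n,p^{n-s+1}}(D_{\alpha_0})\bigr)$ into $(-1)^{c(D_{\alpha_0})}q^{3c(D_{\alpha_0})+p^n}\diffJones_{n-s}(D_{\alpha_0})$, using again that $\jones_{p^n,p^{n-s}} - \jones_{p^n,p^{n-s+1}} = \diffJones_{n,n-s}$ and the abbreviation $\diffJones_{n-s} := \diffJones_{n,n-s}$ in force for this section; and likewise for the $\alpha_1$ column, where the extra shift by $p^n$ in the homological degree contributes the sign $(-1)^{p^n} = -1$ (recall $p$ is odd) already displayed in the statement. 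Assembling the three pieces gives exactly the claimed formula.

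The main obstacle, such as it is, is bookkeeping rather than mathematics: one must be careful that the evaluation $t=-1$ commutes with the decomposition $P = \sum_i t^i P_i$ and does not introduce sign errors in the middle sum, and one must confirm that the range $v=s,\dots,u_i$ and the choice of second index $v-s$ in $DJG$ are the correct survivors of the difference (the $v\le s$ terms cancel between the two pages, since $k(s,v)$ and $\ell(s,v)$ depend on $s$ only for $v>s$... wait, $\ell(s,v)=\varphi(p^{n-s})$ for $v\le s$, so one must check these cancel in the difference of Poincar\'e polynomials — this is exactly the content of the preceding Lemma's computation of $P_i({}_{p^{n-s}}E_1) - P_i({}_{p^{n-s+1}}E_1)$, which I may assume). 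Hence the corollary follows formally, and I would present it in a few lines as above.
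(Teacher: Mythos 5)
Your proposal is correct and follows the same route as the paper: substitute $t=-1$ into the preceding Lemma's formula for $P({}_{p^{n-s}}E_1)-P({}_{p^{n-s+1}}E_1)$ and use Lemma~\ref{lemma:property_poincare_polynomial_spectral_seq} together with convergence to identify $P({}_{p^{n-s}}E_1)(-1,q)$ with $\jones_{p^{n},p^{n-s}}(D)$, so the difference is $\diffJones_{n,n-s}(D)$. Your extra bookkeeping about the $(s-v)$ versus $(v-s)$ index and the translation of $\khp$-differences into $\diffJones$ (hence $G$-differences into $DJG$) is exactly the right reading of the Lemma's statement, so no gap remains.
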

\begin{proof}
It follows easily from previous Lemma by substituting $t = -1$ and noting that 
$P({}_{p^{n-s}}E_{1})(-1,q) = \jones_{p^{n},p^{n-s}}(D)$, by Lemma 
\ref{lemma:property_poincare_polynomial_spectral_seq}.
\end{proof}

\begin{defn}
For any $0 \leq i \leq p^n$ define a map
\begin{align*}
&\kappa \colon \mathcal{B}_{i}(X) \to \mathcal{B}_{p^n - i}(X)\\
&\kappa(\beta)(c) =
\left\{
\begin{array}{ll}
1 - \beta(c), & c \in X, \\
\beta(c), & c \notin X. \\
\end{array}
\right.
\end{align*}

\end{defn}

\begin{prop}\label{prop:changing_orbit}
Let $D$ be $p^{n}$-periodic link diagram and let $X \subset \cross{D}$ be a 
chosen orbit of crossings. Suppose that all crossings from $X$ are positive and let $D^{!}$ 
denote invariant link diagram obtained from $D$ by changing all crossings from $X$ to negative 
ones. 
Then the following equalities hold
\begin{align*}
D_{\alpha} &= D^{!}_{\kappa(\alpha)} \\
|\kappa(\alpha)|_{u} &= p^{n} - |\alpha|_{u}, \mbox{ for } u=0,1, \\
c(D_{\alpha}) &= c(D^{!}_{\kappa(\alpha)}) + p^{n} \\
\end{align*}
\end{prop}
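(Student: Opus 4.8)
The plan is to verify each of the three displayed equalities in Proposition~\ref{prop:changing_orbit} by a direct combinatorial inspection of the effect of the crossing-change on the chosen orbit $X$, reducing everything to the single-crossing local picture since the diffeomorphism generating the $\z{p^n}$-action permutes $X$ freely and the assignments involved are orbit-compatible.

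First I would establish the identity $D_{\alpha} = D^{!}_{\kappa(\alpha)}$. The key observation is that for a positive crossing \posCross{}, the $0$-smoothing is the oriented (Seifert) smoothing \orientRes{}, while for the corresponding negative crossing \negCross{} obtained by switching, the $0$-smoothing is the \emph{other} smoothing and the $1$-smoothing is the oriented one; in other words, switching a crossing interchanges the roles of the $0$- and $1$-smoothings at that crossing. Hence resolving $c \in X$ in $D$ according to $\alpha(c)$ produces exactly the same planar picture as resolving $c$ in $D^{!}$ according to $1 - \alpha(c) = \kappa(\alpha)(c)$, and for $c \notin X$ nothing changes since $\kappa(\alpha)(c) = \alpha(c)$. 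Since $\supp\kappa(\alpha) = \supp\alpha$ and this holds crossing-by-crossing, the two resolved diagrams coincide. The second equality, $|\kappa(\alpha)|_{u} = p^n - |\alpha|_{u}$ for $u = 0,1$, is then immediate from counting: on $X$, which has exactly $p^n$ crossings, $\kappa$ swaps the fibers $\alpha^{-1}(0) \cap X$ and $\alpha^{-1}(1) \cap X$, so $|\kappa(\alpha)|_{1} = \#(\alpha^{-1}(1)\cap X) \to \#(\alpha^{-1}(0)\cap X) = p^n - |\alpha|_{1}$, using that $\alpha$ is supported on all of $X$; symmetrically for $u = 0$.

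The third equality, $c(D_{\alpha}) = c(D^{!}_{\kappa(\alpha)}) + p^n$, is the one requiring the most care and is where the main subtlety lies. By definition $c(D_{\alpha}) = n_{-}(D_{\alpha}) - n_{-}(D)$ and $c(D^{!}_{\kappa(\alpha)}) = n_{-}(D^{!}_{\kappa(\alpha)}) - n_{-}(D^{!})$. By the first equality the diagrams $D_{\alpha}$ and $D^{!}_{\kappa(\alpha)}$ are literally the same, so $n_{-}(D_{\alpha}) = n_{-}(D^{!}_{\kappa(\alpha)})$, and the claim reduces to $n_{-}(D^{!}) = n_{-}(D) + p^n$. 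This last identity is exactly the statement that changing the $p^n$ positive crossings of the orbit $X$ to negative ones adds $p^n$ to the negative-crossing count — which holds because the crossings \emph{not} in $X$ are untouched and retain their signs, while each of the $|X| = p^n$ crossings in $X$ goes from positive to negative. The one point worth stating explicitly is that one must know an honest orientation is in play so that crossing signs are well-defined; this is guaranteed by the standing convention that all links are oriented, and the invariant diagram $D^{!}$ inherits a $\z{p^n}$-invariant orientation (the crossing-change is a local modification away from the fixed-point axis, so it does not disturb the orientation elsewhere). Assembling the three pieces completes the proof.

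The main obstacle I anticipate is purely bookkeeping around orientations and the $c$-shift: one has to be careful that $D^{!}$ is genuinely a $\z{p^n}$-invariant diagram (so that $c(D^{!}_{\kappa(\alpha)})$ is even defined in the equivariant setting) and that the orientation used to compute $n_{-}$ is chosen consistently on $D$ and $D^{!}$. Once the local smoothing-swap picture is pinned down and the orientation conventions are fixed, the rest is a short count, and no deeper input — in particular nothing about Khovanov homology or the spectral sequence — is needed here; this proposition is a preparatory lemma feeding into the skein-relation computation via Corollary~\ref{cor:difference_equivariant_jones_polynomials}.
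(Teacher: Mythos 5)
Your argument is correct and follows essentially the same route as the paper: the first two equalities come directly from the fact that switching a crossing interchanges its $0$- and $1$-smoothings (and a count over the $p^{n}$ crossings of $X$), while the third reduces, via $D_{\alpha}=D^{!}_{\kappa(\alpha)}$, to the observation $n_{-}(D^{!}) = n_{-}(D) + p^{n}$, which is exactly the paper's computation. Your version merely spells out the bookkeeping (including the orientation convention) that the paper leaves implicit.
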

\begin{proof}
The first two equalities are direct consequences of definitions. To prove the third one notice that
\begin{align*}
n_{-}(D) = n_{-}(D^{!}) - p^n.
\end{align*}
Therefore
\begin{align*}
&c(D_{\alpha}) = n_{-}(D_{\alpha}) - n_{-}(D) = n_{-}(D^{!}_{\kappa(\alpha)}) - 
n_{-}(D^{!}) + p^n = \\
&= c(D^{!}_{\kappa(\alpha)}) + p^{n}.
\end{align*}
\end{proof}

\begin{proof}[Proof of Theorem \ref{thm:skein_relations_equiv_jones_pol}]
To prove the first part notice that from corollary 
\ref{cor:difference_equivariant_jones_polynomials} it follows that
\begin{align*}
\diffJones_{n,0}(D) &= (-1)^{c(D_{\alpha_0})} q^{3c(D_{\alpha_0}) + p^n}  
\diffJones_{n,0}(D_{\alpha_0}) \\
&+(-1)^{c(D_{\alpha_1}) + p^n} q^{3 c(D_{\alpha_1}) + 2p^n} \diffJones_{n,0}(D_{\alpha_1}),
\end{align*}
because $D_{\alpha_0}$ and $D_{\alpha_{1}}$ are the only diagrams with isotropy group equal to 
$\z{p^n}$, because these are the only invariant diagrams.

Now without loss of generality assume that the chosen orbit of crossings consists of positive 
crossings. Then $D_{\alpha_0}$ inherits orientations from $D$ and therefore $c(D_{\alpha_0}) = 0$. 
Therefore
\begin{align*}
\diffJones_{n,0}\left(\posOrbit\right) &= q^{p^n} \diffJones_{n,0}\left(\orientResOrbit\right) \\
&+(-1)^{c(D_{\alpha_1}) + p^n} q^{3 c(D_{\alpha_1}) + 2p^n} \diffJones_{n,0}(D_{\alpha_1}),
\end{align*}

On the other hand for $D^{!}$ as in the previous proposition $D^{!}_{\alpha_1}$ 
inherits orientation. Furthermore $c(D^{!}_{\alpha_1}) = - p^{n}$. This gives the following 
equality
\begin{align*}
\diffJones_{n,0}\left(\negOrbit\right) &= (-1)^{c(D^{!}_{\alpha_0})} q^{3c(D^{!}_{\alpha_0}) + p^n}
\diffJones_{n,0}(D^{!}_{\alpha_0}) \\
&+ q^{-p^n} \diffJones_{n,0}\left(\orientResOrbit\right),
\end{align*}
Denote $c = c(D_{\alpha_1})$, then $D_{\alpha_1} = D^{!}_{\alpha_0}$ and 
$c(D^{!}_{\alpha_0}) = c - p^{n}$ by Proposition \ref{prop:changing_orbit}. Therefore
\begin{align*}
\diffJones_{n,0}\left(\posOrbit\right) &= q^{p^n} \diffJones_{0}\left(\orientResOrbit\right) \\
&+(-1)^{c + p^n} q^{3 c + 2p^n} \diffJones_{n,0}(D_{\alpha_1}), \\
\diffJones_{n,0}\left(\negOrbit\right) &= (-1)^{c - p^n} q^{3c - 2p^n}  
\diffJones_{n,0}(\overline{D}_{\alpha_0}) \\
&+ q^{-p^n} \diffJones_{0}\left(\orientResOrbit\right).
\end{align*}
From the above equalities the first part of the theorem follows easily.

To prove the second part, notice that Proposition \ref{prop:changing_orbit} implies that for $v 
\geq s$ the following equality holds.
\begin{align*}
&(-1)^{i}q^{i-p^{n}}DJG_{i}(v,v-s)(D) - (-1)^{p^{n}-i}q^{4p^{n}-i}DJG_{i}(v,v-s)(D^{!}) = \\
&= \sum_{\alpha \in \overline{\mathcal{B}}_{i}^{p^{v}}} 
(-1)^{c+p^{n}+1}q^{3c+3p^{n}}\diffJones_{v,v-s}\left( q^{i-p^{n}} - q^{p^{n}-i} \right)(D_{\alpha}).
\end{align*}
Consequently, if $p^{s} \mid i$, then the above difference is divisible by $q^{p^{s}}-q^{-p^{s}}$. 
To finish the proof combine formula \ref{cor:difference_equivariant_jones_polynomials} and with the 
discussion above.
\end{proof}

\subsection{Proof of Theorem \ref{thm: state sum formula}}

\begin{proof}[Proof of Theorem \ref{thm: state sum formula}]
According to the definition, the equivariant Jones polynomials, can be written as the following 
sum.
\begin{align}\label{eqn: equiv Jones pol sum}
&q^{-n_{+}(D) + n_{-}(D)} \jones_{p^{n},p^{n-s}}(D) = \nonumber \\
&= \sum_{r = n_{-}(D)}^{n_{+}(D)} (-q)^{r} 
\qdim_{\cyclotomicField{p^{n-s}}} 
\Hom{}{\cyclicGroupAlg{p^{n}}}{\cyclotomicField{p^{n-s}}}{\cc^{r-n_{-}(D),\ast}(D)}.
\end{align}
Thus, the only thing we need to do is to determine the quantum dimension of the following graded 
module
$$\Hom{}{\cyclicGroupAlg{p^{n}}}{\cyclotomicField{p^{n-s}}}{\cc^{r-n_{-}(D),\ast}(D)}$$
for $0 \leq r \leq n_{+}(D) + n_{-}(D)$.
Performing calculations as in the proof of Theorem 
\ref{thm:spectral_sequence_equivariant_homology}. 
Let $r = p^{u_{r}} g$, where $\gcd(p,g) = 1$. We obtain the following formula.
\begin{align*}
&\qdim \Hom{}{\cyclicGroupAlg{p^{n}}}{\cyclotomicField{p^{n-s}}}{\cc^{r-n_{-}(D),\ast}(D)} = \\
&= \sum_{v = 0}^{\min(s, u_{r})} \sum_{s \in \mathcal{S}_{r}^{p^{v}}(D)} \jones_{p^{v},1}(s) + 
\sum_{v = \min(s,u_{r})+1}^{u_{r}} \sum_{s \in \mathcal{S}_{r}^{p^{v}}} \jones_{p^{v},p^{v-s}}(s).
\end{align*}
Plugging the above formula into (\ref{eqn: equiv Jones pol sum}) and taking the difference 
$$\jones_{p^{n},p^{n-s}}(D) - \jones_{p^{n},p^{n-s+1}}(D)$$
yields the desired formula.
\end{proof}

\bibliography{biblio}

\noindent W. Politarczyk: Department of Mathematics and Computer Science, Adam Mickiewicz 
University in Poznań,

\noindent e-mail: \textit{politarw@amu.edu.pl}

\end{document}